\newtheorem{theorem}{Theorem}
\newtheorem{df}{Definition}
\newtheorem{prop}{Proposition}
 \title{Recurrence Relations and Determinants}
\begin{document}
\maketitle
\begin{center}Milan Janji\'c\end{center}
\begin{center}Department of Mathematics  and
Informatics,\end{center}
 \begin{center}University of Banja Luka, Republic of Srpska,
 BA\end{center}

\begin{abstract}
We examine relationships between two minors of order $n$  of some matrices of $n$ rows and $n+r$ columns. This is done through a
 class of determinants, here called $n$-determinants,
the investigation of which is our objective. We prove that
$1$-determinants are the upper Hessenberg determinants.
       In particular, we state several $1$-determinants each of which equals  a Fibonacci number.
We also  derive relationships among terms of sequences defined by the same recurrence equation independently of the initial conditions. A result generalizing the formula for the product of two determinants is obtained.
Finally, we prove that  the Schur functions may be expressed as $n$-determinants.

\end{abstract}

Keywords: determinant, recurrence equation, Fibonacci number.

AMS Subject Classification 2008 11C20 11B37

\section{Introduction}

Let $A$ be a square matrix of order $n$ whose elements are in a
commutative ring $R.$ We define a block matrix
$A_r=[A|A_{n+1}|\cdots,|A_{n+r}]$ of $n$ rows and $n+r$
columns (where $A_{n+i},\;(i=1,2,\ldots,r)$ are vector columns) as follows:
\begin{equation}\label{r1} A_{n+j}=\sum_{i=1}^{n+j-1}p_{i,j}A_i,\;(j=1,2,\ldots,r),\;(p_{i,j}\in R).\end{equation}
For a  sequence  $1\leq j_1<j_2<\cdots<j_r<n+r$ of positive integers, we
 let $M=M(\widehat{j_1},\widehat{j_2},\ldots,\widehat{j_r})$ denote  the  determinant of the submatrix of $A_r,$ obtained  by deleting  columns  $j_1,j_2,\ldots,j_r$ of $A_r.$ The notation $\hat j$ means that the $j$th column is deleted.
Note that the last column of $A_r$ cannot be deleted.
The sign $\sigma(M)$ of $M$  is defined as
      \[\sigma(M)=(-1)^{nr+j_1+j_2+\cdots+j_r+\frac{(r-1)r}{2}}.\]
From  the coefficients  $p_{i,j},$ we form a matrix $P$ of $n+r-1$
 rows and $r$ columns as follows:
\begin{equation}\label{mpa}P=\left[\begin{array}{ccccc}
p_{1,1}&p_{1,2}&\cdots&p_{1,r-1}&p_{1,r}\\
p_{2,1}&p_{2,2}&\cdots&p_{2,r-1}&p_{2,r}\\
\vdots&\vdots&\cdots&\vdots&\vdots\\
p_{n,1}&p_{n,2}&\cdots&p_{n,r-1}&p_{n,r}\\
-1&p_{n+1,2}&\cdots&p_{n+1,r-1}&p_{n+1,r}\\
0&-1&\cdots&p_{n+2,r-1}&p_{n+2,r}\\
\vdots&\vdots&\cdots&\vdots&\vdots\\
0&0&\cdots&p_{n+r-2,r-1}&p_{n+r-2,r}\\
0&0&\cdots&-1&p_{r+n-1,r}
\end{array}\right].\end{equation}
The matrix $Q=Q(j_1,\ldots,j_r)$ is  obtained  by deleting $n-1$  rows of $P,$ the indices of which are different from $j_1,j_2,\ldots,j_r.$
Hence,
\begin{equation}\label{pp}Q(j_1,\ldots,j_r)=\left[\begin{array}{lllll}
p_{j_1,1}&\cdots&p_{j_1,t}&\cdots&p_{j_1,r}\\
p_{j_2,1}&\cdots&p_{j_2,t}&\cdots&p_{j_2,r}\\
\vdots&\cdots&\vdots&\cdots&\vdots\\
p_{j_r,1}&\cdots&p_{j_r,t}&\cdots&p_{j_r,r}
\end{array}\right].\end{equation}

\begin{df} We call  $\det Q$ an $n$-determinant.
\end{df}
  In this section, we prove that an $n$-determinant connects $\det A$ with an arbitrary minor of order $n$ of the matrix $A_r.$
In the remaining sections, we give several applications of this result.

 In Section 2, we consider $1$-determinants, which are the  upper Hessenberg determinants.
 Therefore, some important  mathematical objects may be represented as $1$-determinants.
 This is found to be the case for  the Catalan numbers, the  Bell numbers, the  Fibonacci numbers,
 the   Fibonacci polynomials, the generalized Fibonacci numbers, the Tchebychev polynomials of both kinds,
  the continuants, the derangements, the factorials   and the terms of any  homogenous linear recurrence equation.
 We also find several $1$-determinants, each of which equals a Fibonacci number.

The case $n=2$ is examined in Section 3. We show that
$2$-determinants produce some relationships  between two sequences
given by the same recurrence equation, with possibly different
initial conditions. In this sense, we prove a formula for the
Fibonacci polynomials from which  several well-known formulas
follow. For example, this is  the case with the Ocagne's formula
and the index reduction formula. Analogous formulas for the
Tchebychev polynomials are then stated. Also, we derive a result
for the continuants, generalizing the fundamental theorem of
convergents. Another result generalizes the standard recurrence
equation for the derangements.

In Section 4, we consider $3$-determinants and connect terms of three sequences given by the same
 recurrence equation. In particular, we obtain a result for the sequences satisfying the so-called  tribonacci recursion.

   We consider the case when $n$ is arbitrary in Section 5.  The form of an $n$-determinant in one such case is described. This leads to an extension of the formula for the product of two determinants.
  We finish the paper with a result of the type of  Jaccobi-Trudi's formula, expressing the Schur function as an $n$-determinant whose terms are  the elementary symmetric polynomials.

We begin with:
\begin{theorem}\label{th1} Let $1\leq j_1<\cdots<j_r<r+n$ be a sequence of positive integers.
Then,
\[M(\widehat{j_1},\ldots,\widehat{j_r})=\sigma(M)\cdot \det Q\cdot \det A.\]
\end{theorem}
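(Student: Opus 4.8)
The plan is to turn the statement into a clean identity between maximal minors of two rectangular matrices. Since every entry of $A_r$, of $Q$, and of $A$ is a polynomial in the entries of $A$ and in the coefficients $p_{i,j}$, the asserted equality is a polynomial identity in these quantities; hence it suffices to prove it after treating the $p_{i,j}$ and the entries of $A$ as indeterminates and passing to the fraction field, where I may assume $\det A\neq 0$. Working over this field, I first reduce each extended column to the basis $A_1,\dots,A_n$: iterating \eqref{r1} expresses $A_{n+t}=\sum_{i=1}^{n}c_{i,t}A_i$ for a uniquely determined $n\times r$ matrix $C=(c_{i,t})$, so that $A_r=A\,[\,I_n\mid C\,]$. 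Writing $B=[\,I_n\mid C\,]$ and $S=\{1,\dots,n+r\}\setminus\{j_1,\dots,j_r\}$ for the retained columns, and using that selecting the columns $S$ of the product $A_r=AB$ yields $A\cdot B_S$, multiplicativity of the determinant gives
\[
M(\widehat{j_1},\dots,\widehat{j_r})=\det A\cdot\det B_S .
\]
Thus everything reduces to showing $\det B_S=\sigma(M)\,\det Q$.

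Next I encode the recurrences \eqref{r1} as a single relation matrix. Let $K$ be the $(n+r)\times r$ matrix whose $j$-th column carries $p_{1,j},\dots,p_{n+j-1,j}$ in rows $1,\dots,n+j-1$, a $-1$ in row $n+j$, and zeros below; then \eqref{r1} is precisely $A_r K=0$. As $\det A\neq 0$ this forces $BK=0$, and since $K$ has rank $r$ (its staircase of $-1$'s makes it unitriangular up to sign) its columns form a basis of $\ker B$. Two bookkeeping observations link $K$ to $Q$: the matrix $P$ of \eqref{mpa} is $K$ with its last row, namely $(0,\dots,0,-1)$, deleted; and since every $j_i\le n+r-1$ avoids that last row, the matrix $Q$ of \eqref{pp} is exactly the row-submatrix of $K$ on rows $j_1,\dots,j_r$. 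Hence $\det Q=\det K_{\,\overline S}$, where $\overline S=\{j_1,\dots,j_r\}$ is the complement of $S$.

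The heart of the matter is the classical duality between the maximal minors of $B$ and the complementary maximal minors of a kernel matrix (provable by the Grassmann--Plücker relations or by induction on a single column swap): for $B$ of full row rank with $BK=0$ there is a single scalar $c$ with $\det B_S=c\,(-1)^{\sum_{s\in S}s}\det K_{\overline S}$ for every $n$-subset $S$, the dependence on $S$ entering only through $(-1)^{\sum_{s\in S}s}$ (equivalently through $(-1)^{\sum_i j_i}$, since $\sum_{s\in S}s+\sum_i j_i=\binom{n+r+1}{2}$ is constant). I fix $c$ at the distinguished set $S_0=\{1,\dots,n\}$: there $B_{S_0}=I_n$ gives $\det B_{S_0}=1$, while $\overline{S_0}=\{n+1,\dots,n+r\}$ selects the bottom $r\times r$ block of $K$, which is upper triangular with $-1$'s on the diagonal, so $\det K_{\overline{S_0}}=(-1)^r$ and $c=(-1)^{\binom{n+1}{2}+r}$. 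Combined with $\det Q=\det K_{\overline S}$ this yields $M=\det A\cdot c\,(-1)^{\sum_i j_i}\det Q$.

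It then remains to check that $c\,(-1)^{\sum_i j_i}$ equals $\sigma(M)=(-1)^{nr+\sum_i j_i+\binom r2}$, and this sign bookkeeping is the main obstacle. Substituting $\sum_{s\in S}s=\binom{n+r+1}{2}-\sum_i j_i$ and the value of $c$, one reduces the claim to $\binom{n+1}{2}+r+\binom{n+r+1}{2}\equiv nr+\binom r2\pmod 2$, which follows from $\binom{n+r+1}{2}=\binom{n+1}{2}+nr+\binom{r+1}{2}$ together with $\binom{r+1}{2}+r=\binom r2+2r\equiv\binom r2$. As an independent check on the normalization I would verify the case $r=1$ directly: deleting one column $j_1\le n$ and expanding the retained column $A_{n+1}=\sum_i p_{i,1}A_i$ by multilinearity, only the term $i=j_1$ survives (every other repeats a present column), and restoring $A_{j_1}$ to its place costs the sign $(-1)^{\,n-j_1}$, giving $M=(-1)^{\,n+j_1}p_{j_1,1}\det A$, in agreement with $\sigma(M)\det Q\det A$. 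This fixes the constant and, propagated through the duality, the general sign.
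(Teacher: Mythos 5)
Your proposal is correct, but it proves the theorem by a genuinely different route from the paper. The paper's own proof is an induction on $r$: it expands the last column of $M$ via the recurrence (\ref{r1}), applies the induction hypothesis to the $r$ surviving terms, and recognizes the resulting sum as the Laplace expansion of $\det Q$ along its last column; this is elementary, self-contained, and valid verbatim over any commutative ring, since it never divides. You instead reduce to generic entries over a field (legitimate, as both sides are integer polynomials in the entries of $A$ and the $p_{i,j}$), factor $A_r=A\,[\,I_n\mid C\,]$ to get $M=\det A\cdot\det B_S$, identify every $Q$ as a row-submatrix of the kernel matrix $K$, and then invoke the classical duality between the maximal minors of a full-row-rank matrix and the complementary maximal minors of a basis of its kernel, fixing the scalar by the normalization $S_0=\{1,\dots,n\}$. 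Your sign bookkeeping does close: the congruence $\binom{n+1}{2}+r+\binom{n+r+1}{2}\equiv nr+\binom{r}{2}\pmod 2$ is true, and your independent check at $r=1$ reproduces exactly the paper's base case. (There is a harmless notational slip: the line $M=\det A\cdot c\,(-1)^{\sum_i j_i}\det Q$ drops the factor $(-1)^{\binom{n+r+1}{2}}$ coming from $\sum_{s\in S}s$, but your final substitution reinstates it, so the conclusion is unaffected.) What your route buys: it explains $\sigma(M)\det Q$ structurally, as a dual Pl\"ucker coordinate of $\ker B$, so the sign $\sigma(M)$ becomes inevitable rather than the outcome of an inductive computation; it also aligns naturally with Section 5 of the paper, where $A_r=[A\mid AB]$ and the product formula $\det(AB)=\det A\cdot\det B$ fall out as the special case in which the deleted columns are exactly the first $n$. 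What it costs: the heart of your argument --- the kernel-minor duality with its precise sign pattern $(-1)^{\sum_{s\in S}s}$ --- is cited rather than proved (you only name the Grassmann--Pl\"ucker relations or a swap induction as possible proofs), so to be self-contained you would need to include its short verification, e.g.\ by normalizing the kernel basis to the matrix with $-C$ stacked above $I_r$ and comparing the two complementary minors directly; moreover, the generic-entry reduction is genuinely necessary in your approach because $A$ need not be invertible in $R$, a complication the paper's induction never encounters.
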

\textbf{Proof.}
The proof is by induction on  $r.$
For  $r=1,$ we have  $1\leq j_1\leq r,$
since the case $j_1>r$ makes no sense. It follows that $M=M(\widehat{j_1},r+1).$
Taking $i=1$ in (\ref{r1}), we obtain
\[M(\widehat j_1,r+1)=\sum_{m=1}^n p_{m,1}M(\widehat j_1,m).\]
The sum on the right-hand side reduces to a single term when $m=j_1.$  We conclude that
\[M(\widehat{j_1},n+1)=p_{j_1,1}M(\widehat{j_1},j_1).\]
In $M(\widehat{j_1},j_1),$ we interchange the last column with the preceding one
and repeat this until the  $j_1$th column takes the $j_1$th place.
 For this, we need  $n-j_1$ interchanges. It follows  that
 \[M(\widehat j_1,j_1)=(-1)^{n+j_1}p_{j_1,1}\cdot \det A.\]
On the other hand, we obviously have
 $\sigma(M)=(-1)^{n+j_1},$ which proves the theorem for $r=1.$

Assume  that the theorem is true for  $1\leq k<r.$
The last column of the minor  $M(\widehat{j_1},\ldots,\widehat{j_r})$ is column $n+r$ of $A_r.$
The condition  (\ref{r1}) implies
\[ M(\widehat{j_1},\ldots,\widehat{j_r})=\sum_{m=1}^{n+r-1}p_{m,r}
 M(\widehat{j_1},\ldots,\widehat{j_r},m).\]

In the sum on the right-hand side the terms obtained for  $m\in\{j_1,\ldots,j_r\}$ remain.
They  are of the form:
\[S(t)=p_{j_t,r}M(\widehat{j_1},\ldots,\widehat{j_t},\ldots \widehat{j_r},j_t),\;(t=1,2,\ldots,r),\]
that is,
\[S(t)=(-1)^{n+r-1-j_t}p_{j_t,r}M(\widehat{j_1},\ldots,j_t,\ldots \widehat{j_r}).\]

Applying the induction hypothesis yields
 \[S(t)=(-1)^{n+t-1-j_t}\sigma(M(\widehat{j_1},\ldots,j_t,\ldots \widehat{j_r}))p_{j_t,r}
 Q(j_1,\ldots,\widehat{j_t},\ldots,j_r)\cdot \det A.\]
By a simple calculation, we obtain
\[(-1)^{n+t-1-j_t}\sigma(M(\widehat{j_1},\ldots,j_t,\ldots \widehat{j_r}))=(-1)^{r+t}
\sigma(M),\]
hence,
\[S(t)=\sigma(M)(-1)^{r+t}Q(j_1,\ldots,\widehat{j_t},\ldots,j_r)\cdot \det A.\]

Summing over all $t$ gives
\[\sum_{t=1}^rS(t)=\sigma(M)\bigg[\sum_{t=1}^r(-1)^{r+t}p_{j_t,r}Q(j_1,\ldots,\widehat{j_t},\ldots,j_r)\bigg]\cdot \det A.\]
The expression in the square brackets is the expansion  of (\ref{pp}) by elements of the last column, and the theorem is proved.

\section{$1$-determinants}
In this case,  $A$ is a matrix of order $1$, that is, an element of $R.$
We also have that $j_1=1,j_2=2,\ldots,j_r=r,$ hence,
the minor $M(\widehat{j_1},\widehat{j_2},\ldots,\widehat{j_r})$ must be of the form:
 $M(\hat 1,\hat 2,\ldots,\hat r).$ We easily obtain that $\sigma(M)=1.$
The matrix $P$ is as follows:
 \begin{equation}\label{mp1}P=\left[\begin{array}{rrrrrr}
p_{1,1}&p_{1,2}&p_{1,3}&\cdots&p_{1,r-1}&p_{1,r}\\
-1&p_{2,2}&p_{2,3}&\cdots&p_{2,r-1}&p_{2,r}\\
0&-1&p_{3,3}&\cdots&p_{3,r-1}&p_{3,r}\\
\vdots&\vdots&\vdots&\ddots&\vdots&\vdots\\
0&0&0&\cdots&p_{r-1,r-1}&p_{r-1,r}\\
0&0&0&\cdots&-1&p_{r,r}\end{array}\right].\end{equation}

 We see that  $Q=P.$ Therefore,  each $1$-determinant is an  upper Hessenberg
 determinant. Applying Theorem \ref{th1}, we obtain the following result.
\begin{prop}\label{tn1}
Let  $a_1,a_2,\ldots$ be a sequence in $R$ such that
\begin{equation}\label{fn}a_{1+r}=\sum_{i=1}^rp_{i,r}a_i.\end{equation}
Then,
\[a_{r+1}=a_1\det Q.\]
\end{prop}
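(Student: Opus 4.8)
The plan is to recognize Proposition \ref{tn1} as the specialization of Theorem \ref{th1} to the case $n=1$, so that essentially all the work lies in matching the abstract objects of the theorem to the scalar sequence $a_1, a_2, \ldots$ and then reading off the conclusion. First I would set up the dictionary. With $n=1$ the matrix $A$ is a single element of $R$, which I identify with $a_1$, so $\det A = a_1$. The block matrix $A_r$ has one row and $1+r$ columns, its columns being the scalars $a_1, \ldots, a_{1+r}$; the defining relation (\ref{r1}) then reads $a_{1+j} = \sum_{i=1}^{j} p_{i,j} a_i$, which is precisely (\ref{fn}) and in particular produces $a_{1+r}$.

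Next I would pin down the minor appearing on the left of Theorem \ref{th1}. Since we must delete $r$ of the $1+r$ columns and the last column may not be deleted, the only admissible choice is $j_1 = 1, j_2 = 2, \ldots, j_r = r$, so $M = M(\hat 1, \ldots, \hat r)$ is the $1 \times 1$ determinant of the single surviving column, namely $a_{1+r}$. Then I would evaluate the sign: substituting $n=1$ and $j_t = t$ into the definition of $\sigma(M)$ gives an exponent $r + \tfrac{r(r+1)}{2} + \tfrac{(r-1)r}{2} = r(r+1)$, which is even, so $\sigma(M) = 1$, as already noted in the text preceding the proposition.

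Finally, the matrix $Q$ is obtained from $P$ by deleting those rows whose indices differ from $j_1, \ldots, j_r$; with $n=1$ there are no such rows, so $Q = P$ is exactly the upper Hessenberg matrix (\ref{mp1}). Assembling these identifications and invoking Theorem \ref{th1} yields
\[ a_{1+r} = M(\hat 1, \ldots, \hat r) = \sigma(M)\cdot \det Q \cdot \det A = 1\cdot \det Q \cdot a_1, \]
which is the claimed formula $a_{r+1} = a_1 \det Q$. There is no genuine obstacle here, since the substantive content is entirely contained in Theorem \ref{th1}; the only points demanding care are the bookkeeping checks that the forced column deletion leaves exactly $a_{1+r}$, that $\det A = a_1$, and that the sign collapses to $1$ — each of which is immediate once the $n=1$ dictionary is in place.
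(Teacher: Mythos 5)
Your proof is correct and follows the same route as the paper: specializing Theorem \ref{th1} to $n=1$, noting that the forced deletion $j_t=t$ leaves the single entry $a_{1+r}$, that $\sigma(M)=1$, and that $Q=P$. The explicit sign computation $r+\tfrac{r(r+1)}{2}+\tfrac{(r-1)r}{2}=r(r+1)$ is a welcome detail the paper leaves implicit, but otherwise the two arguments coincide.
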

This result is known. For instance, it follows from Theorem 4.20,\cite{hh}.

We give a number of examples for sequences given by the formula (\ref{fn}). Some of them are well-known.
In all examples, we take $a_1=1.$
\begin{enumerate}
\item[$1^\circ$] \textbf{ Catalan numbers.} We let $C_n$ denote the
$n$th Catalan number. If we take  $p_{i,j}=C_{j-i},$ then equation
(\ref{fn}) becomes
\[a_{1+r}=\sum_{i=1}^rC_{r-i}a_i.\]
The Segner's recurrence equation for Catalan numbers implies that $a_{r+1}=C_r.$
Hence, a way to write the Segner's formula in terms of determinants is
\[C_r=\left|\begin{array}{llllll}
C_0&C_1&C_2&\cdots&C_{r-2}&C_{r-1}\\
-1&C_0&C_1&\cdots&C_{r-3}&C_{r-1}\\
0&-1&C_0&\cdots&C_{r-4}&C_{r-3}\\
\vdots&\vdots&\vdots&\ddots&\vdots&\vdots\\
0&0&0&\cdots&C_0&C_1\\
0&0&0&\cdots&-1&C_0\end{array}\right|.\]

\item[$2^\circ$]\textbf{ Bell numbers.} If one takes $p_{i,j}={j-1\choose i-1}$ in (\ref{mp1}), then (\ref{fn}) becomes the recursion for the Bell numbers.
    Thus, a determinantal expression for the Bell number $B_r$ is
\[B_r=\left|\begin{array}{cccccc}
\left({0\atop 0}\right)&\left({1\atop 0}\right)&\left({2\atop 0}\right)&\cdots&\left({r-2\atop 0}\right)&\left({r-1\atop 0}\right)\\
-1&\left({1\atop 1}\right)&\left({2\atop 1}\right)&\cdots&\left({r-2\atop 1}\right)&\left({r-1\atop 1}\right)\\
0&-1&\left({2\atop 2}\right)&\cdots&\left({r-2\atop 2}\right)&\left({r-1\atop 2}\right)\\
\vdots&\vdots&\vdots&\ddots&\vdots&\vdots\\
0&0&0&\cdots&\left({r-2\atop r-2}\right)&\left({r-1\atop r-2}\right)\\
0&0&0&\cdots&-1&\left({r-1\atop r-1}\right)\end{array}\right|.\]

The order of the determinant equals $r.$

\item[$3^\circ$]\textbf{ Eigensequences for Stirling numbers.}  If
$\left\{{n\atop k}\right\}$ is the Stirling number of the second kind, and
$p_{i,j}=\left\{{j-1\atop i-1}\right\}$ in (\ref{mp1}),
 then (\ref{fn}) becomes the recursion for the so-called eigensequence $(E_1,E_2,\ldots)$ of the Stirling number of the second kind.
 Therefore,
\[E_r=\left|\begin{array}{cccccc}
\left\{{0\atop 0}\right\}&\left\{{1\atop 0}\right\}&\left\{{2\atop 0}\right\}&\cdots&\left\{{r-2\atop 0}\right\}&\left\{{r-1\atop 0}\right\}\\
-1&\left\{{1\atop 1}\right\}&\left\{{2\atop 1}\right\}&\cdots&\left\{{r-2\atop 1}\right\}&\left\{{r-1\atop 1}\right\}\\
0&-1&\left\{{2\atop 2}\right\}&\cdots&\left\{{r-2\atop 2}\right\}&\left\{{r-1\atop 2}\right\}\\
\vdots&\vdots&\vdots&\ddots&\vdots&\vdots\\
0&0&0&\cdots&\left\{{r-2\atop r-2}\right\}&\left\{{r-1\atop r-2}\right\}\\
0&0&0&\cdots&-1&\left\{{r-1\atop r-1}\right\}\end{array}\right|.\]

Note that analogous identity holds for the unsigned Stirling numbers of the first kind.
\item[$4^\circ$] \textbf{ Factorials}. Let $k$ be a positive integer. Consider the following $1$-determinant $D$  of order $r>1$:
\[D=\left|\begin{array}{ccccccc}
1&1&1&1&\cdots&1&1\\
-1&k&k&k&\cdots&k&k\\
0&-1&k+1&k+1&\cdots&k+1&k+1\\
0&0&-1&k+2&\cdots&k+2&k+2\\
\vdots&\vdots&\vdots&\ddots&\vdots&\vdots&\vdots\\
0&0&0&\cdots&-1&k+r-3&k+r-3\\
0&0&0&\cdots&0&-1&k+r-2\end{array}\right|.\]
In this case, the formula (\ref{fn}) becomes
\[a_{r+1}=1+\sum_{i=2}^r(k+i-2)a_i.\]
Subtracting the equation
\[a_{r+2}=1+\sum_{i=2}^{r+1}(k+i-2)a_i\] from the preceding, easily yields
\[a_{r+2}=(k+r)a_{r+1},\] which is the recursion for the falling factorials. Hence,
\[D=\frac{(k+r-1)!}{k!}.\]

\item[$5^\circ$] {\bf Derangements}.
We let $D_r$ denote the number of derangements of $r.$
The recurrence equation for the derangements is \[D_2=1,D_3=2,\;D_r=(r-1)(D_{r-2}+D_{r-1}),\;(r\geq 4).\]
We have  \[D_{r+1}=
\left|\begin{array}{rrrrrrr}
1&1&0&0&\cdots&0&0\\
-1&1&2&0&\cdots&0&0\\
0&-1&2&3&\cdots&0&0\\
0&0&-1&3&\cdots&0&0\\
\vdots&\vdots&\vdots&\ddots&\vdots&\vdots\\
0&0&0&\cdots&0&r-1&r\\
0&0&0&\cdots&0&-1&r\end{array}\right|.\]

\item[$6^\circ$]{\bf Fibonacci polynomials}.
In this case, we consider the recurrence equation  \[a_1=1,a_2=x,\;a_{k+1}=a_{k-1}+xa_k,\;(k\geq 2)\]
 for Fibonacci polynomials.
 Hence, for  Fibonacci polynomial $F_{r+1}(x)$  we have
\[F_{r+1}(x)=
\left|\begin{array}{rrrrrr}
x&1&0&\cdots&0&0\\
-1&x&1&\cdots&0&0\\
0&-1&x&\cdots&0&0\\
\vdots&\vdots&\vdots&\ddots&\vdots&\vdots\\
0&0&0&\cdots&x&1\\
0&0&0&\cdots&-1&x\end{array}\right|.\] The order of the
determinant  equals $r.$ Taking particulary $x=1,$ we obtain the well-known formula for Fibonacci numbers.

\item[$7^\circ$] {\bf Tchebychev polynomials of the first kind}.
The recurrence relation for the  Tchebychev polynomials of the first kind is
\[T_0(x)=1,\;T_1(x)=x,\; T_{k}(x)=-T_{k-2}(x)+2xT_{k-1}(x),\;(k>2).\]
Theorem \ref{tn1} now implies the following equation:
\[T_{r}(x)=\left|\begin{array}{rrrrrr}
x&-1&0&\cdots&0&0\\
-1&2x&-1&\cdots&0&0\\
0&-1&2x&\cdots&0&0\\
\vdots&\vdots&\vdots&\ddots&\vdots&\vdots\\
0&0&0&\cdots&2x&-1\\
0&0&0&\cdots&-1&2x\end{array}\right|.\]
The order of the determinant is $r.$
A similar formula holds for Tchebychev polynomials $U_r(x)$ of the second kind.
\item[$8^\circ$]{\bf Hermite polynomials.}
For the Hermite polynomials $H_r(x),$ we have the  following recurrence equation:
\[H_0(x)=1,\;H_1(x)=2x,\; H_{r+1}(x)=-2rH_{r-1}(x)+2xH_{r}(x),\;(r\geq 2).\]
Applying Theorem \ref{tn1},  we obtain the following expression:
\[H_{r}(x)=
\left|\begin{array}{cccccc}
2x&-2&0&\cdots&0&0\\
-1&2x&-4&\cdots&0&0\\
0&-1&2x&\cdots&0&0\\
\vdots&\vdots&\vdots&\ddots&\vdots&\vdots\\
0&0&0&\cdots&2x&-2(r-1)\\
0&0&0&\cdots&-1&2x\end{array}\right|.\]

\item[$9^\circ$] {\bf Continuants}.
Take in (\ref{fn}) $p_{k,k}=p_k,\;p_{k-1,k}=1,$ otherwise $p_{i,j}=0.$  We obtain the recursion:
\[a_1=1,p_1,\;a_{1+k}=a_{k-1}+p_ka_k,\;(k=2,\ldots).\]
The terms of this sequence are the continuants, and are denoted by $(p_1,p_2,\ldots,p_r).$
We thus obtain the following well-known formula:
\begin{equation}\label{kon}(p_1,p_2,\ldots,p_r)=\left|\begin{array}{llllll}
p_1&1&0&\cdots&0&0\\
-1&p_2&1&\cdots&0&0\\
\vdots&\vdots&\vdots&\vdots&\vdots&\vdots\\
0&0&0&\cdots&p_{r-1}&1\\
0&0&0&\cdots&-1&p_r\end{array}\right|.\end{equation}

\item[$10^\circ$] {\bf Linear homogenous recurrence equation}.
Let $b_1,b_2,\ldots,b_k$ be given elements of $R.$
Consider the sequence $1,a_2,a_3,\ldots$ defined as follows:
\[a_2=b_1,\ldots,a_{k+1}=b_k,\;a_{r+1}=\sum_{i=r-k+1}^rp_{i,r}a_i,\;(r>k).\]
We thus have a linear homogenous recurrence equation of order $k.$
From Theorem \ref{th1} follows
\[a_{r+1}=\left|\begin{array}{cccccccccc}
b_1&b_2&\cdots&b_k&0&0&\cdots&\cdots&0\\
-1&0&\cdots&0&p_{k+1,1}&0&\cdots&\cdots&0\\
0&-1&\cdots&0&p_{k+1,2}&p_{k+2,2}&\cdots&\cdots&0\\
\vdots&\vdots&\cdots&\vdots&\vdots&\vdots&\cdots&\cdots&\vdots\\
0&0&\cdots&0&p_{k+1,k-1}&p_{k+2,k-1}&\cdots&\cdots&0\\
0&0&\cdots&-1&p_{k+1,k}&p_{k+2,k}&\cdots&\cdots&0\\
\vdots&\vdots&\cdots&\vdots&\vdots&\vdots&\cdots&\cdots&0\\
0&0&\cdots&0&\vdots&\vdots&\cdots&\cdots&\cdots\\
0&0&\cdots&0&\vdots&\vdots&\cdots&-1&p_{k,r}
\end{array}\right|.\]

\item[$11^\circ$] {\bf Generalized Fibonacci numbers}.
Taking in the preceding formula that each $p_{i,j}$ equals $1,$ we obtain $k$-step Fibonacci numbers dependant on the initial conditions.
The standard $k$-step Fibonacci numbers $F^{(k)}_{n+k}$ are obtained for $b_1=b_2=\cdots=b_{k-1}=0,\;b_k=1.$
We thus have
\[F^{(k)}_{r+k}=\left|\begin{array}{rrrrrrrrr}
1&1&\cdots&1&0&\cdots&0&0&0\\
-1&1&\cdots&1&1&\cdots&0&0&0\\
0&-1&\cdots&1&1&\cdots&0&0&0\\
\vdots&\vdots&\cdots&\vdots&\vdots&\cdots&\vdots&\vdots&\vdots\\
0&0&\cdots&0&0&\cdots&\cdots&\vdots&\vdots\\
0&0&\cdots&0&0&\cdots&\cdots&-1&1
\end{array}\right|,\]
where the size of the determinant is $r+k.$

\item[$12^\circ$]{\bf Fibonacci numbers.} Consider the sequence
given by \[a_1=1,\;a_2=1,\;a_r=\sum_{i=1}^{r-2}a_i,\;(r>2).\] This, in fact, is a recursion for the Fibonacci numbers. To show this, we
first replace $r$ by $r+1$ to obtain
 \[a_{r+1}=\sum_{i=1}^{r-1}a_i.\]
 Subtracting two last equations yields
 \[a_{r+1}=a_{r}+a_{r+1},\] which is the standard recursion for the Fibonacci numbers.

Proposition \ref{tn1} implies
\[F_{r-1}=\left|\begin{array}{rrrrrrrr}
1&1&\cdots&1&\cdots&1&1\\
-1&0&\cdots&1&\cdots&1&1\\
0&-1&\cdots&1&\cdots&1&1\\
\vdots&\vdots&\cdots&\vdots&\cdots&\vdots&\vdots\\
0&0&\cdots&0&\cdots&0&1\\
0&0&\cdots&0&\cdots&-1&0
\end{array}\right|.\]
The order of the determinant equals $r.$

\item[$13^\circ$]{\bf Fibonacci numbers. } We define a matrix $Q_r=(q_{ij})$ of order $r$ as follows:
\[q_{ij}=\left\{\begin{array}{cc}
-1&i=j+1,\\i+j+1\,(\mbox{ mod }2)&i\leq j,\\
0&\mbox{ otherwise}.
\end{array}\right.\]
We find the recursion which, as in Proposition \ref{tn1}, produces this matrix.
Obviously, $a_1=1,\;a_2=1,\;a_3=1,\;a_4=2,$
and
\[a_{2r}=a_1+\cdots+a_{2r-1}.\] Also, \[a_{2r+2}=a_1+\cdots+a_{2r-1}+a_{2r+1}.\]
Subtracting two last equation yields $a_{2r+2}=a_{2r+1}+a_{2r}.$
Similarly, $a_{2r+1}=a_{2r}+a_{2r-1}.$  The recursion for the Fibonacci numbers is thus obtained. It follows that
$F_{r-1}=\det Q_r,\;(r>1).$

 \item[$14^\circ$]{\bf Fibonacci numbers with odd
indices}. Define a matrix $Q_r=(q_{ij})$ of order $r$ as follows:
\[q_{ij}=\left\{\begin{array}{cc}
-1&i=j+1,\\2&i=j,\\
1&i<j,\\ 0&\mbox{ otherwise}.
\end{array}\right.\]

In this case, we have the recursion $a_1=1,\;a_{2}=2,\;a_3=5,\;a_{r+1}=\sum_{i=1}^{r-1}+2a_r,\;(r\geq 2).$
From this, we easily obtain the recursion
\[a_{r+2}=3a_{r+1}-a_r.\] The identity 7, proved in \cite{bk}, shows that we have a recursion for the Fibonacci numbers with odd indices.
It follows that  $F_{2r+1}=\det Q_r.$

Note that we described in \cite{mil} a connection of this determinant with a particular kind of composition of natural numbers.

\item[$15^\circ$] {\bf Fibonacci numbers with even
indices.} For the matrix
\[Q_r=\left[\begin{array}{rrrrrr}
1&2&3&\cdots&r-1&r\\
-1&1&2&\cdots&r-2&r-1\\
0&-1&1&\cdots&r-3&r-2\\
\vdots&\vdots&\vdots&\vdots&\vdots&\vdots\\
0&0&0&\cdots&1&2\\
0&0&0&\cdots&-1&1\end{array}\right],\] the corresponding recursion has the form:
\[a_1=1,\;a_2=3,\;
a_{1+r}=\sum_{i=1}^r(r-i+1)a_i,\;(r\geq 2).\]
Also,
\[a_{2+r}=\sum_{i=1}^{r+1}(r-i+1)a_i+\sum_{i=1}^{r+1}a_i.\]
Subtracting this equation from the preceding, we obtain
\[a_{2+r}-a_{1+r}=\sum_{i=1}^{r+1}a_i.\]
In the same, way we obtain
\[a_{3+r}-a_{2+r}=\sum_{i=1}^{r+2}a_i.\]
Again, we subtract this equation from the preceding to obtain
\[a_{3+r}=3a_{2+r}-a_{1+r}.\]  This is a recursion for Fibonacci numbers, by   Identity 7 in  \cite{bk}. Taking into count the initial conditions, we have $\det Q_r=F_{2r}.$
\end{enumerate}

\section{$2$-determinants}
We consider the case $n=2.$ Assume additionally   that  $p_{i,j}=0,\;(j>i).$ Then, $P $ has at most three nonzero diagonals. Therefore, it may be written in the form:
\begin{equation}\label{pa2}P=\left[\begin{array}{llllll}
b_{1}&0&0&\cdots&0&0\\
c_{2}&b_{2}&0&\cdots&0&0\\
-1&c_{3}&b_{3}&\cdots&0&0\\
0&-1&c_{4}&\cdots&0&0\\
\vdots&\vdots&\vdots&\cdots&\vdots&\vdots\\
0&0&\vdots&\cdots&c_{r}&b_{r}\\
0&0&\vdots&\cdots&-1&c_{r+1}
\end{array}\right].\end{equation}
The corresponding $2$-determinant $\det Q$ is a lower triangular block determinant of the form:
\begin{equation}\label{mq}\left|\begin{array}{c|c}Q_{11}&0\\\hline Q_{12}&Q_{22}\end{array}\right|.\end{equation}
Here,   $Q_{11}$ is a lower triangular determinant lying in the first $k$ rows and the first $k$ columns of $Q.$
It follows that $Q_{11}=b_{1}\cdots b_{k}.$
 The order of $Q_{22}$ is $r-k$ and it is of the same form as the determinant of the  matrix $P$ in (\ref{mp1}).

As a consequence of Theorem \ref{th1}, we obtain
\begin{prop}\label{dvan} Let $(b_1,b_2,\ldots),\;(c_2,c_3,\ldots)$ be any two sequences. Let  \[(a_1^{(i)},a_2^{(i)},\ldots)\;(i=1,2)\] be two sequences defined by the same
recurrence equation of  the second order:
\[ a_r^{(i)}=b_{r-2}a^{(i)}_{r-2}+c_{r-1}a^{(i)}_{r-1},\;(r>2),\;(i=1,2).\] Then,
\[\left|\begin{array}{cc}a_{k+1}^{(1)}&a_{r+2}^{(1)}\\a_{k+1}^{(2)}&a_{r+2}^{(2)}\end{array}\right|=(-1)^{k}b_1\cdots b_k
\cdot d_{r-k+1}\cdot \left|\begin{array}{cc}a_{1}^{(1)}&a_{2}^{(1)}\\a_{1}^{(2)}&a_{2}^{(2)}\end{array}\right|,\]
 where
 \[d_1=1,\;d_2=c_{k+2},\;d_i=b_{k+i-1}d_{i-2}+c_{k+i}d_{i-1},\;(i>2).\]
\end{prop}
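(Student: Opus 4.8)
The plan is to derive Proposition \ref{dvan} as a direct specialization of Theorem \ref{th1} to the case $n=2$, with the additional hypothesis $p_{i,j}=0$ for $j>i$ that forces the matrix $P$ into the banded form (\ref{pa2}). First I would set up the correspondence between the abstract framework of Section 1 and the concrete sequences in the statement. Take $A$ to be the $2\times 2$ matrix whose two columns are $A_1=(a_1^{(1)},a_1^{(2)})^{T}$ and $A_2=(a_2^{(1)},a_2^{(2)})^{T}$, so that $\det A$ is exactly the $2\times 2$ determinant of initial conditions on the right-hand side. The recurrence $a_r^{(i)}=b_{r-2}a_{r-2}^{(i)}+c_{r-1}a_{r-1}^{(i)}$ says precisely that each later column $A_{2+j}$ is the combination $b_{j}A_{j}+c_{j+1}A_{j+1}$ of earlier columns, which is the defining relation (\ref{r1}) with the coefficients $p_{j,j+2}=b_j$, $p_{j+1,j+2}=c_{j+1}$, and all other $p_{i,j}=0$. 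This is what produces the three nonzero diagonals of (\ref{pa2}).

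Next I would identify the particular minor $M$ whose value Theorem \ref{th1} computes. With $n=2$ and $r$ replaced by the total number of auxiliary columns, the two columns surviving in the $2\times 2$ minor on the left should be columns $k+1$ and $r+2$ of $A_r$, i.e. I delete all columns except those two. This fixes the deleted-index set $\{j_1,\ldots,j_r\}$ to be the complement of $\{k+1,r+2\}$, and hence fixes $Q=Q(j_1,\ldots,j_r)$, which by the discussion preceding the proposition is the block determinant (\ref{mq}). The sign $\sigma(M)$ and the scalar factor $b_1\cdots b_k$ then come out of evaluating $\det Q$ through its block structure: the top-left block $Q_{11}$ is lower triangular with diagonal $b_1,\ldots,b_k$, contributing $b_1\cdots b_k$, and the bottom-right block $Q_{22}$ is an upper Hessenberg determinant of order $r-k$ of the form (\ref{mp1}). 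By Proposition \ref{tn1}, that Hessenberg block equals the term $d_{r-k+1}$ of the sequence defined by the stated recurrence $d_1=1$, $d_2=c_{k+2}$, $d_i=b_{k+i-1}d_{i-2}+c_{k+i}d_{i-1}$, since those are exactly the continuant-type entries $c,b$ appearing in $Q_{22}$.

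I would then assemble the pieces: Theorem \ref{th1} gives $M=\sigma(M)\cdot\det Q\cdot\det A$, and substituting $\det Q = b_1\cdots b_k\cdot d_{r-k+1}$ together with the computed sign yields the claimed formula. The main bookkeeping obstacle will be pinning down the sign $(-1)^k$ correctly and verifying that the surviving-column choice really does place $Q_{11}$ and $Q_{22}$ in the stated positions; this requires tracking the index shift between the "$r$" of the general theorem and the indices $k$ and $r$ used in the proposition, and confirming that the off-diagonal $-1$ entries of $P$ in (\ref{pa2}) land so as to make $Q_{22}$ a genuine Hessenberg matrix whose Hessenberg recurrence matches the definition of $d_i$. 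The algebraic content beyond this indexing is light, since the block-triangular evaluation of $\det Q$ and the appeal to Proposition \ref{tn1} are both immediate once the matrix $Q$ is correctly identified; the only real care needed is in matching initial conditions $d_1=1$, $d_2=c_{k+2}$ to the first two rows of the block $Q_{22}$.
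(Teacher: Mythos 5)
Your proposal is correct and follows essentially the same route as the paper: specialize Theorem \ref{th1} to $n=2$ with the banded matrix $P$ of (\ref{pa2}), observe that deleting all columns except $k+1$ and $r+2$ forces $\det Q$ into the block-triangular form (\ref{mq}) with $\det Q_{11}=b_1\cdots b_k$ and $Q_{22}$ an upper Hessenberg determinant of order $r-k$ equal to $d_{r-k+1}$, and combine this with the sign $\sigma(M)=(-1)^k$. The only blemish is notational (the coefficients should read $p_{j,j}=b_j$, $p_{j+1,j}=c_{j+1}$ in the convention of (\ref{r1}), where the second index is $j$ rather than the column number $j+2$ of $A_r$), which does not affect the argument.
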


We illustrate the preceding proposition with  some examples.
\begin{enumerate}
\item[$1^\circ$] {\bf Fibonacci polynomials}. Take
$x_{1}^{(1)}=F_u(x),\;x_{2}^{(1)}=F_{u+1}(x),\;x_{1}^{(2)}=F_{v}(x),\;x_{2}^{(2)}=F_{v+1}(x),\;b_i=1,\;c_{i+1}=x,\;(i=1,2,\ldots).$

Note that, in this case, the $2$-determinant equals the Fibonacci polynomials $F_{r-k}(x).$
From Proposition \ref{dvan}, we obtain the following identity:
\begin{equation}\label{xxy}\left|\begin{array}{cc}F_{u+k}(x)&F_{u+r}(x)\\F_{v+k}(x)&F_{v+r}(x)\end{array}\right|=(-1)^{k}
F_{r-k}(x)\cdot
\left|\begin{array}{cc}F_{u}(x)&F_{u+1}(x)\\F_{v}(x)&F_{v+1}(x)\end{array}\right|.\end{equation}
Several well-known formulas may be obtained from this.

Taking $u=1,v=0$ yields
\[F_{k+1}(x)F_r(x)-F_k(x)F_{r+1}(x)=(-1)^{k}F_{r-k}(x),\]
which is the  Ocagne's identity for the Fibonacci polynomials.
Applying this identity on the right-hand side of (\ref{xxy}),  we obtain
\[\left|\begin{array}{cc}F_{u+m}(x)&F_{u+r}(x)\\F_{v+m}&F_{v+r}(x)\end{array}\right|=(-1)^{m+u+1}F_{r-m}(x)F_{v-u}(x).\]

We now may easily derive the index-reduction formula for the Fibonacci polynomials. Namely, replacing $m$ by $m-t$ and $r$ by $r-t,$ we get
\[\left|\begin{array}{cc}F_{u+k-t}(x)&F_{u+r-t}(x)\\F_{v+k-t}(x)&F_{v+r-t}(x)\end{array}\right|=(-1)^{k-t+u+1}F_{r-k}(x)
F_{v-u}(x).\]
Comparing the last two equations produces
\[\left|\begin{array}{cc}F_{u+k-t}(x)&F_{u+r-t}(x)\\F_{v+k-t}(x)&F_{v+r-t}(x)\end{array}\right|=(-1)^t
\left|\begin{array}{cc}F_{u+k}(x)&F_{u+r}(x)\\F_{v+k}(x)&F_{v+r}(x)\end{array}\right|,\]
which is the index reduction formula.

Note that such a formula for the Fibonacci numbers is proved  in \cite{Jo}.

\item[$2^\circ$] {\bf Fibonacci and Lucas polynomials}.
The Lucas polynomials $L_r(x)$ satisfy the same recurrence
relation as do the  Fibonacci polynomials with different initial
conditions. In this case, also, the $1$-determinant equals a Fibonacci polynomial.
We state two equations, one for mixed Lucas and  Fibonacci polynomials, another for Lucas polynomials:
\[\left|\begin{array}{cc}F_{u+k}(x)&F_{u+r}(x)\\L_{v+k}(x)&L_{v+r}(x)\end{array}\right|=(-1)^{k}F_{r-k}(x)\cdot
\left|\begin{array}{cc}F_{u}(x)&F_{u+1}(x)\\L_{v}(x)&L_{v+1}(x)\end{array}\right|,\]
and
\[\left|\begin{array}{cc}L_{u+k}(x)&L_{u+r}(x)\\L_{v+k}(x)&L_{v+r}(x)\end{array}\right|=(-1)^{k}F_{r-k}(x)\cdot
\left|\begin{array}{cc}L_{u}(x)&L_{u+1}(x)\\L_{v}(x)&L_{v+1}(x)\end{array}\right|.\]
\item[$3^\circ$] {\bf Tchebychev polynomials.}
Tchebychev polynomials of the first and second kind also satisfy the same recursion with different initial conditions.
Here, the $1$-determinant equals a Tchebychev polynomial of the second kind.
 We  state the following three identities, which are a consequence of Proposition \ref{dvan}"
  \[\left|\begin{array}{cc}U_{u+k}(x)&U_{u+r}(x)\\U_{v+k}(x)&U_{v+r}(x)\end{array}\right|=U_{r-k-1}(x)
\left|\begin{array}{cc}U_u(x)&U_{u+1}(x)\\U_v(x)&U_{v+1}(x)\end{array}\right|,\]

\[\left|\begin{array}{cc}T_{u+k}(x)&T_{u+r}(x)\\T_{v+k}(x)&T_{v+r}(x)\end{array}\right|=U_{r-k-1}(x)
\left|\begin{array}{cc}T_u(x)&T_{u+1}(x)\\T_v(x)&T_{v+1}(x)\end{array}\right|,\]

\[\left|\begin{array}{cc}U_{u+k}(x)&U_{u+r}(x)\\T_{v+k}(x)&T_{v+r}(x)\end{array}\right|=U_{r-k-1}(x)
\left|\begin{array}{cc}U_u(x)&U_{u+1}(x)\\T_v(x)&T_{v+1}(x)\end{array}\right|.\]

\item[$4^\circ$] {\bf Continued fractions}.
Up until now, the division was not used. We might therefore  assume that the elements of the concerned sequences belong to any commutative ring with $1.$ In this part, we suppose that they are positive real numbers.
Let $A_2$ be the identity matrix of order $2,$ and let $(c_1,c_2,\ldots)$ be an arbitrary sequence of positive real numbers.
   Form the matrix  $A_{r}$
by the following rule:
\[A_{2+k}=A_{k}+c_kA_{k+1},\;(k=1,2,\ldots,r).\]
It is easy to see that $A_r$  has the form:
     \[A_r=\left[\begin{array}{cccccccc}1&0&1&c_2&\ldots&(c_2,c_3,\ldots,c_{r})\\
0&1&c_1&(c_1,c_2)
&\ldots&(c_1,c_2,c_3,\ldots,c_{r})\end{array}\right],\] where $(c_m,c_{m+1},\ldots,c_p)$ are the continuants.
The $1$-determinant equals the  continuant $(c_{k+2},c_{k+3},\ldots,c_{r}).$

The fundamental recurrence relation for the continued fractions gives an expression  for the difference between two consecutive convergents.
Proposition \ref{dvan} allows us to derive a formula for the difference between two arbitrary convergents.
Thus, the following formula holds:
\[\left|\begin{array}{cc}(c_2,c_3,\ldots,c_{k})&(c_2,c_3,\ldots,c_{r})\\(c_1,c_2,c_3,\ldots,c_{k})&(c_1,c_2,c_3,\ldots,c_{r})
\end{array}\right|=(-1)^{k+1}(c_{k+2},c_{k+3},\ldots,c_{r}),\]
or, equivalently,
\[\frac{(c_1,c_2,c_3,\ldots,c_{r})}{(c_2,c_3,\ldots,c_{r})}-\frac{(c_1,c_2,c_3,\ldots,c_{k})}{(c_2,c_3,\ldots,c_{k})}=\]\[=
(-1)^{k+1}\frac{(c_{k+2},c_{k+3},\ldots,c_{r})}{(c_2,c_3,\ldots,c_{k})\cdot (c_2,c_3,\ldots,c_{r})},\]
with the convention that for $r=k+1$ the expression    $(c_{k+2},c_{k+3},\ldots,c_{r})$ equals $1.$

If $r<k+1,$ then the proof follows from Proposition \ref{dvan}.
If $r=k+1,$ then  we take $(c_{k+2},c_{k+3},\ldots,c_{m})=1,$ as then there is no matrix $Q_{22}.$
Hence, our formula becomes the continued fraction fundamental recurrence relation.

\item[$5^\circ$]{\bf Derangements.}
Take $A=\left[\begin{array}{cc}1&0\\1&1\end{array}\right].$ Let
the matrix $A_r$ be formed by the recursion
\[A_{2+r}=r(A_{1+r}+A_{r}),\;(r\geq 1).\]
 It is obvious that the $r$th element of the first row of $A_r$ equals $D_{r-1}.$ Also, the $r$th term of
 the second row of $A_r$ equals
  $(r-1)!.$
It follows that \[M(k+1,r+1)=\left|\begin{array}{cc}D_k&D_r\\k!&r!\end{array}\right|.\]
We thus obtain the following identity:
\[\left|\begin{array}{cc}D_k&D_r\\k!&r!\end{array}\right|=
(-1)^{k}k!
\left|\begin{array}{rrrrrr}
k+1&k+2&0&\cdots&0&0\\
-1&k+2&k+3&\cdots&0&0\\
0&-1&k+3&\cdots&0&0\\
0&0&-1&\cdots&0&0\\
\vdots&\vdots&\vdots&\vdots&\vdots\\
0&0&0&\cdots&r-1&r\\
0&0&0&\cdots&-1&r\end{array}\right|.\]
In particular, for $r=k+1,$ we have the standard recursion $D_{k+1}=kD_k+(-1)^k$ for the derangements.
\end{enumerate}
The preceding identities may be called the identities of order two, since they deal with determinants of order two.
Hence, we have identities of order two  of sequences satisfying a recurrence equation of order two.
We now proceed  to  derive  some identities of order two  for sequences satisfying a recurrence equation of order three.
Let $r$ be a positive integer, and let $(a_i),(b_{i+1}),(c_{i+2}),\;(i=1,2,\ldots)$ be any three sequences.
Let $A$ be an arbitrary matrix of order $2,$ and let the matrix $A_r$ be defined as follows:
 \[A_{3}=b_1A_1+c_2A_2,\;A_{3+j}=a_jA_j+b_{j+1}A_{j+1}+c_{j+2}A_{j+2},\;(1<j<r-2).\]
The corresponding matrix $P$ has the following form:
 \[P=\left[\begin{array}{rrrrrrr}
b_1&a_1&0&\cdots&0&0&0\\
c_2&b_2&a_2&\cdots&0&0&0\\
-1&c_3&b_3&\dots&0&0&0\\
0&-1&c_4&\dots&0&0&0\\
\vdots&\vdots&\vdots&\cdots&\vdots&\vdots&\vdots\\
0&0&0&\cdots&c_{r-1}&b_{r-1}&a_{r-1}\\
0&0&0&\cdots&-1&c_{r}&b_{r}\\
0&0&0&\cdots&0&-1&c_{r+1}
\end{array}\right].\]
The $2$-determinant  $\det Q$ is  obtained from $P$ by deleting  the $k+1$ row of $P,$
 where $(0\leq k<r).$  By $D(i_1,\ldots,i_t),$ we denote the minor of $P,$ the main diagonal of which is $(i_1,i_2,\ldots,i_t).$
 Hence, \[\det Q=D(b_1,\ldots,b_k,c_{k+2},\ldots,c_{r+1}).\]
Then, Theorem \ref{th1} gives an identity of order two
for terms of the matrix $A_r.$

We now investigate the structure of  $\det Q.$ Expanding by
elements of the first $k$ rows yields
\[\det Q=D(b_1,\ldots,b_k)D(c_{k+2},\ldots,c_{r+1})+a_kD(b_1,\ldots,b_{k-1})D(c_{k+3},\ldots,c_{r+1}).\]
On the other hand, we have
\[D(b_1)=b_1,\;
D(b_1,b_2)=\left|\begin{array}{rr}b_{1}&a_{1}\\c_{2}&b_{2}\end{array}\right|,\;D(b_1,b_2,b_3)=\left|
\begin{array}{rrr}b_{1}&a_{1}&0\\c_{2}&b_{2}&a_{2}\\
 -1&c_{3}&b_3\end{array}\right|.\]

Assume $k>3.$
 Expanding $D(b_1,\ldots,b_k)$ by the elements of the last column, we obtain
\[D(b_1,\ldots,b_k)=b_{k}D(b_1,\ldots,b_{k-1})-
a_{k-1}c_kD(1,\ldots,b_{k-2})-\]\begin{equation}\label{tri1}-a_{k-1}a_{k-2}D(b_1,\ldots,b_{k-3}).\end{equation}

Also, \[D(c_{k+2})=c_{k+2},\;
D(c_{k+2},c_{k+3})=\left|\begin{array}{rr}c_{k+2}&b_{k+2}\\-1&c_{k+3}\end{array}\right|,\] and
\[D(c_{k+2},c_{k+3},c_{k+4})=\left|\begin{array}{rrr}c_{k+2}&b_{k+2}&a_{k+2}\\-1&c_{k+3}&b_{k+3}\\
 0&-1&c_{k+4}\end{array}\right|.\]

If $k>3,$ then by expanding along the first column, we obtain
\[D(c_{k+2},\ldots,c_{r+1})=c_{k+3}D(c_{k+3},\ldots,c_{r+1})+\]
\begin{equation}\label{tri2}+b_{k+2}D(c_{k+4},\ldots,c_{r+1})+a_{k+2}D(c_{k+5},\ldots,c_{r+1}).\end{equation}

We have thus proved
\begin{prop} The $2$ determinant $\det Q$ is uniquely determined by the recurrence equations  (\ref{tri1}) and (\ref{tri2}).
\end{prop}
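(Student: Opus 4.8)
The plan is to read the statement off the block decomposition of $\det Q$ already obtained, namely
\[\det Q = D(b_1,\ldots,b_k)\,D(c_{k+2},\ldots,c_{r+1}) + a_k\,D(b_1,\ldots,b_{k-1})\,D(c_{k+3},\ldots,c_{r+1}).\]
This exhibits $\det Q$ as a fixed combination, with the given entries $a_k$ as coefficients, of members of two families of minors of $P$: the upper-left ``$b$-minors'' $D(b_1,\ldots,b_j)$ and the lower-right ``$c$-minors'' $D(c_{k+2+s},\ldots,c_{r+1})$. Hence it suffices to show that each family is computed from the entries $a_i,b_i,c_i$ alone by a recurrence together with explicit initial data; substituting the two families back into the decomposition then determines $\det Q$ unambiguously, which is exactly the assertion.

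For the $b$-minors I would first record the base cases $D(b_1)$, $D(b_1,b_2)$, $D(b_1,b_2,b_3)$, which are written out directly, and for $k>3$ derive the three-term relation (\ref{tri1}). The route is a cofactor expansion along column $k$ of the leading minor, which meets only the diagonal band $b_k$ and the superdiagonal band $a_{k-1}$; re-expanding the second cofactor along its last row, which now meets only the $c_k$ and $-1$ bands, splits it again. Because $P$ carries only the four bands $a$, $b$, $c$, $-1$, no other entries intervene, and the three surviving cofactors are again $b$-minors, of sizes $k-1$, $k-2$, $k-3$, so (\ref{tri1}) closes within the family.

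Symmetrically, for the $c$-minors I would record the base cases $D(c_{k+2})$, $D(c_{k+2},c_{k+3})$, $D(c_{k+2},c_{k+3},c_{k+4})$ and expand $D(c_{k+2},\ldots,c_{r+1})$ along its first row, which meets the three bands $c_{k+2}$, $b_{k+2}$, $a_{k+2}$; simplifying each cofactor with the help of the $-1$ band turns it into a $c$-minor smaller by one, two, and three, giving (\ref{tri2}). The main obstacle is purely the bookkeeping of signs: one must carry the cofactor signs through the (sometimes iterated) expansions so as to reproduce precisely the coefficients and the mixed sign patterns displayed in (\ref{tri1}) and (\ref{tri2}), the negative contributions being traceable to the $-1$ entries on the lower band of $P$. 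With both recurrences and their initial conditions in hand, the decomposition formula shows that $\det Q$ is determined, completing the proof.
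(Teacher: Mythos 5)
Your proposal is correct and is essentially the paper's own argument: the paper likewise reads the proposition off the Laplace expansion of $\det Q$ along its first $k$ rows, and obtains (\ref{tri1}) by expanding the $b$-minor along its last column and (\ref{tri2}) by expanding the $c$-minor along its first column (your first-row expansion is the transpose-equivalent route and yields the same three-term recursion). One small point your derivation would surface: the leading coefficient in (\ref{tri2}) should be $c_{k+2}$, the $(1,1)$ entry, rather than the $c_{k+3}$ printed in the paper.
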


We illustrate the preceding considerations by the so-called \textbf{tribonacci} numbers. We assume that all $a$'s, $b$'s, and $c$'s equal $1.$
Then,
\[D(b_1)=1,\;D(b_1,b_2)=0,\;d(b_1,b_2,b_3)=-2,\]
and, for $s>3,$
\[D(b_1,b_2,\ldots,b_s)=D(b_1,\ldots,b_{s-1})-D(b_1,\ldots,b_{s-2})
-D(b_1,\ldots,b_{s-3}).\] This recursion designates the so-called \textbf{reflected tribonacci numbers}\\ (A057597, \cite{slo}).
We denote these numbers by $RT_i(1,0,-2),\;(i=1,2,\ldots).$
Also,
\[D(c_1)=1,\;D(c_1,c_2)=2,\;d(c_1,c_2,c_3)=4,\]
and, for $s>3,$
\[D(c_1,c_2,\ldots,c_s)=D(c_1,\ldots,c_{s-1})+D(c_1,\ldots,c_{s-2})
+D(c_1,\ldots,c_{s-3}).\] This is a recursion for tribonacci numbers, denoted by $T_i(1,2,4),\;(i=1,2,\ldots).$

Hence, our $2$-determinant consists of tribonacci and reflected tribonacci numbers. On the other hand, if $A$ is the identity matrix of order $2,$ then the first row of $A_r$
consists of tribonacci numbers $T_i(1,0,1),\;(i=1,2,\ldots),$ and the second row consists of the standard tribonacci numbers \\ $T_i(0,1,1),\;(i=1,2,\ldots).$ As a consequence of Theorem \ref{th1}, we have the following identity:
\[\left|\begin{array}{rr}T_{k+1}(1,0,1)&T_{r+2}(1,0,1)\\T_{k+1}(0,1,1)&T_{r+2}(0,1,1)\end{array}\right|=(-1)^k
\left|\begin{array}{rr}RT_{k}(1,0,-2)&-RT_{k-1}(1,0,-2)\\T_{r-k}(1,2,4)&T_{r-k-1}(1,2,4)\end{array}\right|.\]
Note that last rows in the preceding determinants consist of the standard tribonacci numbers.

   \section{$3$-determinants}
In this section, the order of the matrix $A$ being $3,$ we
deal with determinants of order $3.$ It may be said that  the
corresponding identities are all of order $3$.    We investigate in
detail the particular case when $p_{i,j}=0,\;(j>i).$  Then, the
matrix $P$ may have at most four nonzero diagonals.
 We have \[P=\left[\begin{array}{rrrrrrr}
a_1&0&0&\cdots&0&0&0\\
b_2&a_2&0&\cdots&0&0&0\\
c_3&b_3&a_3&\dots&0&0&0\\
-1&c_4&b_4&\dots&0&0&0\\
0&-1&c_5&\cdots&0&0&0\\
\vdots&\vdots&\vdots&\cdots&\vdots&\vdots&\vdots\\
0&0&0&\cdots&c_r&b_r&a_r\\
0&0&0&\cdots&-1&c_{r+1}&b_{r+1}\\
0&0&0&\cdots&0&-1&c_{r+2}
\end{array}\right].\]
The corresponding $3$-determinant $\det Q$   is  obtained by deleting
rows $k+1$ and $m+1$ of $P,$  where $(0\leq k<m\leq  r+1).$ The matrix $Q$ is a lower
triangular block matrix of the form (\ref{mq}),
  with $\det Q_{11}=a_1\cdot a_2\cdots a_k.$
 The order of the matrix  $Q_{22}$ is $r-k.$

 We denote $\det Q_{22}=D_k(s,r),$ where $s=m-k.$ Note that $s\geq 1.$
 For the columns of $A_r$ we have the following recursion:
  \begin{equation}\label{rekk}A_{3+i}=a_iA_{i}+b_{i+1}A_{1+i}+c_{i+2}A_{i+2},\;(i\geq 1).\end{equation}
  It follows that the sequences in the rows of $A_r$ satisfy a recurrence equation of order $3,$
 with the initial conditions given by the rows of $A.$
The set $\{j_1,\ldots,j_r\}$ equals
$\{1,2,\ldots,k,k+2,\ldots,m,m+2,\ldots,r+2\}.$  A simple
calculation shows that $\sigma(M)=(-1)^{m+k+1}.$ As a consequence of  Theorem \ref{th1}, we have
\begin{prop}\label{cetiri} Let $A$ be any matrix of order $3$, let $(a_i),(b_{i+1}),(c_{i+2}),\;(i=1,2,\ldots)$
 be any sequences.
Then,
\[M(k+1,m+1,r+2)=(-1)^{m+k+1}a_1\cdots a_k\det Q \cdot\det A.
\]
\end{prop}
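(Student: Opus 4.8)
The claim is the $n=3$ specialization of Theorem~\ref{th1}, so the natural plan is simply to verify that the three ingredients on the right-hand side of the general formula take the stated shapes when the matrix $P$ has the special four-diagonal form displayed above. The plan is to check three things in turn: first, that the deleted-row index set $\{j_1,\ldots,j_r\}$ is exactly $\{1,\ldots,k,k+2,\ldots,m,m+2,\ldots,r+2\}$; second, that $\sigma(M)=(-1)^{m+k+1}$ for this index set; and third, that $\det Q$ factors as $a_1\cdots a_k$ times $\det Q_{22}$. Once all three are in hand, substituting into the conclusion $M=\sigma(M)\cdot\det Q\cdot\det A$ of Theorem~\ref{th1} yields the proposition immediately.

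\textbf{The index set and the sign.}
For the first point, I would recall that $Q$ is obtained from $P$ by keeping exactly the rows whose indices are $j_1<\cdots<j_r$, so deleting rows $k+1$ and $m+1$ means the retained index set is $\{1,\ldots,r+2\}\setminus\{k+1,m+1\}$, which is precisely the stated set. For the sign, I would substitute $r=3$ and this explicit list of $j_t$'s into
\[
\sigma(M)=(-1)^{nr+j_1+\cdots+j_r+\frac{(r-1)r}{2}}=(-1)^{9+\sum_t j_t+3}.
\]
The sum $\sum_t j_t$ is the sum $1+2+\cdots+(r+2)=\frac{(r+2)(r+3)}{2}$ with the two omitted indices $(k+1)$ and $(m+1)$ removed. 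The exponent therefore reduces, modulo $2$, to a fixed constant minus $(k+1)+(m+1)$, and collecting the parity gives $(-1)^{m+k+1}$. This is the ``simple calculation'' the proposition refers to; I would carry it out once carefully, tracking only parities.

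\textbf{The block factorization.}
The remaining and most substantive step is the factorization of $\det Q$. Here the structure of $P$ is decisive: because $p_{i,j}=0$ for $j>i$, the matrix is banded below the super-diagonal, so rows $1,\ldots,k$ of $Q$ (coming from the first $k$ rows of $P$) have their nonzero entries confined to the first $k$ columns, giving the lower-triangular-block shape~(\ref{mq}) with a zero upper-right block. The upper-left block $Q_{11}$ is then lower triangular with diagonal entries $a_1,\ldots,a_k$, so $\det Q_{11}=a_1\cdots a_k$; this uses that the $a_i$ sit on the top diagonal of $P$ exactly at the positions that survive as the diagonal of the first block once the rows before the deletions are kept in order. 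The complementary block $Q_{22}$ has order $r-k$ and is, by definition, $D_k(s,r)$. Thus $\det Q=a_1\cdots a_k\cdot\det Q_{22}$. I expect the main obstacle to be bookkeeping: confirming that after deleting rows $k+1$ and $m+1$ the surviving entries really do align into the claimed triangular blocks, which I would make rigorous by indexing the retained rows and columns explicitly and checking which $p_{i,j}$ can be nonzero in each block. With the factorization established, the conclusion follows by inserting $\sigma(M)=(-1)^{m+k+1}$ and $\det Q=a_1\cdots a_k\det Q_{22}$ into Theorem~\ref{th1}.
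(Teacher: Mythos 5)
Your overall plan coincides with the paper's: Proposition~\ref{cetiri} is exactly Theorem~\ref{th1} specialized to this setting, and the three ingredients you isolate (the identification of the index set $\{j_1,\ldots,j_r\}$, the sign $\sigma(M)$, and the factorization $\det Q = a_1\cdots a_k\cdot\det Q_{22}$ coming from the block lower-triangular shape of $Q$) are precisely the three facts the paper checks. Your treatment of the index set and of the block factorization is correct and matches the paper's.

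The sign computation, however, contains a genuine error. You substitute $r=3$ into $\sigma(M)=(-1)^{nr+j_1+\cdots+j_r+\frac{(r-1)r}{2}}$, obtaining $(-1)^{9+\sum_t j_t+3}$. But in this proposition it is $n$, the order of $A$ and of the minor $M$, that equals $3$; the parameter $r$ is the number of appended (equivalently, deleted) columns and is arbitrary. Your own next sentence concedes this, since you sum $r$ indices $j_t$ ranging over $\{1,\ldots,r+2\}\setminus\{k+1,m+1\}$, which is incompatible with $r=3$. The correct exponent is
\[
3r+\Bigl(\tfrac{(r+2)(r+3)}{2}-(k+1)-(m+1)\Bigr)+\tfrac{r(r-1)}{2}=r^2+5r+1-k-m,
\]
and since $r^2+5r=r(r+5)$ is always even, this is congruent to $k+m+1$ modulo $2$, giving $\sigma(M)=(-1)^{m+k+1}$ for every $r$. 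With your substitution the exponent instead becomes $10+\tfrac{(r+2)(r+3)}{2}-k-m$, whose parity is $k+m+\tfrac{(r+2)(r+3)}{2}$ and hence depends on $r$: for $r\equiv 1,2\pmod 4$ it yields $(-1)^{k+m}$, the wrong sign. So the conclusion $(-1)^{m+k+1}$ that you assert does not follow from the calculation as you set it up; it is recovered only after correcting the substitution to $n=3$ with $r$ left general. The rest of the argument stands once this is fixed.
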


We denote $\det Q=D_k(s,r).$ Note that $s\geq 1.$  Now investigate the structure of the array $D_k(s,r).$
The matrix $Q_{22}$ has at most  five nonzero   diagonals.
Assume that $s\geq 3.$

1) The main  diagonal of $Q_{22}$ is
 \[b_{k+2},\ldots,b_{k+s},\widehat{k+s+1},c_{k+s+2},\ldots,c_{r+2},\]
where there are $s-1$ of $b$'s and $r+1-k-s$ of $a$'s\\
2) The first superdiagonal  is
\[a_{k+2},\ldots,a_{k+s},\widehat{k+s+1},b_{k+s+2}\ldots,b_{r+1},\] where
there are  $s-1$ of $a$'s and $r-k-s$ of $b$'s.\\
 3) The first subdiagonal is
 \[c_{k+3},\ldots,c_{k+s},\widehat{k+s+1},-1,\ldots,-1,\]
where  there are  $s-2$ of $c$'s and $r+1-k-s$ of $-1$'s.\\
 4) The second superdiagonal is
  \[0,\ldots,0,\widehat{k+s+1},a_{k+s+2},\ldots,a_r,\]
  where  there are $s-1$ of $0$'s and $r-k-s-1$ of $a$'s.\\
 5) The second subdiagonal is
 \[-1,\ldots,-1,\widehat{k+s+1},0,\dots,0,\]
 where  there are $s-3$ of $-1$'s and $r+1-k-s$ of $0$'s.

We prove that the array  $D_k(s,r)$ may be determined recursively.

{\bf Case $s=1$}. We begin with
\[D_k(1,k)=1,\;D_k(1,k+1)=c_{k+3},\;D_k(1,k+2)= \left|\begin{array}{rr}c_{k+3}&b_{k+3}
 \\-1&c_{k+4}\end{array}\right|.\]
 For $t>2,$ expanding $D_k(1,k+t)$ by elements from the last row yields the following recursion:
\[D_k(1,k+t)=c_{k+t+2}D_k(1,k+t-1)+b_{k+t+1}D_k(1,k+t-2)+\]\begin{equation}\label{rek1}+a_{k+t}D_k(1,k+t-3).\end{equation}

{\bf Case $s=2$}. We have
\[D_k(2,k+1)=b_{k+2},\;D_k(2,k+2)= \left|\begin{array}{rr}b_{k+2}&a_{k+2}
 \\-1&c_{k+4}\end{array}\right|,\]\[
 D_k(2,k+3)=\left|\begin{array}{rrr}b_{k+2}&a_{k+2}&0\\-1&c_{k+4}&b_{k+4}\\
 0&-1&c_{k+5}\end{array}\right|.\]
For $t\geq 4,$  we calculate $D_k(2,k+t)$ by the recursion (\ref{rek1}).

{\bf Case $s=3$}. We now have $D_k(3,k+2)=b_{k+2},$
\[D_k(3,k+3)= \left|\begin{array}{rr}b_{k+2}&a_{k+2}
 \\c_{k+3}&b_{k+3}\end{array}\right|,\;
 D_k(3,k+4)=
 \left|\begin{array}{rrr}b_{k+2}&a_{k+2}&0\\c_{k+3}&b_{k+3}&a_{k+3}\\
 0&-1&c_{k+5}\end{array}\right|,
 \]
 \[D_k(3,k+5)=\left|\begin{array}{rrrr}b_{k+2}&a_{k+2}&0&0\\c_{k+3}&b_{k+3}&a_{k+3}&0\\
 0&-1&c_{k+5}&b_{k+5}\\0&0&-1&c_{k+6}\end{array}\right|.\]
For $t>5,$ we calculate $D_k(3,k+t)$ again by the recursion (\ref{rek1}).

{\bf Case $s\geq 4.$} The minors  $D_k(s,k+s-1),\ldots,D_k(s, k+2s-1)$ may be obtained as follows:
\[D_k(s,k+s-1)=b_{k+2},\;
  D_k(s,k+s)=\left|\begin{array}{rr}b_{k+2}&a_{k+2}
 \\c_{k+3}&b_{k+3}\end{array}\right|,\]\[
 D_k(s,k+s+1)=\left|\begin{array}{rrr}b_{k+2}&a_{k+2}&0\\c_{k+3}&b_{k+3}&a_{k+3}\\
 -1&c_{k+4}&b_{k+4}\end{array}\right|
 .\]  When $1<t\leq s-1,$  we have the following recursion:
\[D_k(s,k+s+t)=b_{t+k+2}D_k(s,k+s+t-1)-a_{t+k+1}c_{t+k+2}D_k(s,k+s+t-2)-\]
 \begin{equation}\label{post}-a_{t+k+1}a_{t+k}D_k(s,k+s+t-3).\end{equation}

 Next, we have
  \[D_k(s,k+2s)=c_{s+k+2}D_k(s,k+2s-1)+a_{s+k}D_k(s,k+2s-2).\]
  If $s<r-k,$ then
 \[ D_k(s,k+2s+1)=c_{s+k+3}D_k(s,k+2s)+b_{s+k+2}D_k(s,k+2s-1).\]
 If $s+1<r-k,$ then for $t$, where $s+1<t\leq r-k,$ we have the recursion (\ref{rek1}).

The recursion with respect to $k$ is backward. The minimal value that $r$ can take is $r=k.$ Then,
\[D_k(1,k)=1,\;D_k(2,k+1)=D_{k}(3,k+2)=b_{k+2}.\]
 Assume that $s>3.$ Expanding $D_k(s,r)$ by elements of the first row, we obtain the following recursion:
\[D_k(s,r)=b_{k+2}D_{k+1}(s-1,r-1)-a_{k+2}c_{k+3}D_{k+2}(s-2,r-2)-\]
\begin{equation}\label{pok}-a_{k+2}a_{k+3}D_{k+3}(s-3,r-3).
   \end{equation}
We have thus proved
\begin{prop} The array $D_k(s,r)$ is uniquely determined by the formulas (\ref{rek1}), (\ref{post}), and (\ref{pok}).
\end{prop}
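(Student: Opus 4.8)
The plan is to prove determinacy by a single well-founded induction whose parameter is the order (that is, the size) of the square array defining $D_k(s,r)$. The three identities need not be re-derived: each of (\ref{rek1}), (\ref{post}) and (\ref{pok}) has already been obtained above by a Laplace expansion of $\det Q_{22}$ — along the last row, along the last column, and along the first row, respectively — so the only facts I would invoke are that these identities hold and that every determinant on their right-hand sides is a proper minor of $Q_{22}$ and hence of strictly smaller order. The content of the proposition is therefore combinatorial rather than analytic: I must check that, starting from the minors already evaluated in closed form, the three recurrences constitute a terminating rewriting system that reaches every admissible triple $(k,s,r)$, with the agreement of any overlapping rules being automatic.

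First I would fix the termination measure. To $D_k(s,r)$ attach the nonnegative integer $N$ equal to the order of the matrix $Q_{22}$. Because each recurrence was produced by a cofactor, or iterated cofactor, expansion, the minors on its right-hand side all have order strictly below $N$; hence every application of (\ref{rek1}), (\ref{post}) or (\ref{pok}) strictly decreases $N$. Termination is then immediate, and what remains is to identify the values of $N$ at which no recurrence fires and to confirm that these coincide with the minors evaluated directly in the text.

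Next I would run the coverage check along the case split on $s$ prepared before the statement. For $s=1$ the minors of orders $0,1,2$ are the seeds and (\ref{rek1}) governs every higher order; for $s=2$ and $s=3$ the seeds run through orders $3$ and $4$ respectively, after which (\ref{rek1}) takes over; and for $s\ge 4$ the three smallest minors are evaluated directly, the middle orders are produced by (\ref{post}), the two transitional values at $r=k+2s$ and $r=k+2s+1$ are given by their own short recurrences, and the tail is produced by (\ref{rek1}). Independently, (\ref{pok}) supplies for every $s>3$ a reduction that lowers $s$ while lowering $N$, so one may instead collapse any layer $s\ge 4$ onto the layers $s\le 3$; either route terminates at a seed. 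Because all of the rules are expansions of the single determinant $\det Q_{22}$, wherever two of them apply their outputs necessarily coincide, so no consistency condition has to be imposed by hand, and the induction on $N$ closes.

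I expect the real work to lie in the index bookkeeping of the coverage step rather than in any idea. One must verify that the stated ranges of validity of (\ref{rek1}) and (\ref{post}), together with the two transitional evaluations at the top of the $s\ge 4$ block, tile the admissible orders with neither a gap nor a triple counted twice, and that each right-hand side remains inside the admissible region $s\ge 1$ — which is precisely why (\ref{pok}) is invoked only for $s>3$. This is the one place where an off-by-one in a shift such as $k+s+t$ would damage the argument, and it is where I would concentrate the care.
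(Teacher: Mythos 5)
Your proposal is correct and takes essentially the same route as the paper: the paper's own proof is exactly the preceding case analysis ($s=1$, $s=2$, $s=3$, and $s\geq 4$), in which the seeds, the stated ranges of validity of (\ref{rek1}) and (\ref{post}), the two transitional evaluations at $r=k+2s$ and $r=k+2s+1$, and the backward recursion (\ref{pok}) are shown to reach every entry of the array. Your only addition is the explicit well-founded-induction scaffolding --- the termination measure given by the order of $Q_{22}$ and the observation that overlapping expansions of the same determinant agree automatically --- which the paper leaves implicit.
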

 We state some examples.
\begin{enumerate}
\item[$1^\circ$] All $a$'s equal $0.$
 It follows from(\ref{cetiri})  that all minors $M(k+1,m+1,r+3)$ are zeros, except the case $k=0,$
when we have the same situation as in the preceding section.

\item[$2^\circ$]  All $b$'s equal $0,$ and all $a$'s and $c$'s equal $1$.
The formula (\ref{rekk}) has the form:
\[A_{3+i}=A_{i}+A_{i+2},\;(i\geq 1).\]
If $A$ is the identity matrix, then the rows of $A_r$ make the so-called {\bf middle sequence}((A000930,\cite{slo})).
For a fixed $k$ the first three rows of the array $D_k(s,r)$ are obtained by the recursion (\ref{rek1}), hence they are also formed by the numbers of the middle sequence. If $s>3,$ then the first $s-1$ elements in row $s$  are obtained by the recursion
 (\ref{post}). The remaining terms are again obtained from (\ref{rek1}).
Therefore, (\ref{cetiri}) becomes an identity for the numbers of the middle sequence.

\item[$3^\circ$]  All $c$'s equal $0$, and all $a$'s
 and $b$'s equal $1$.
In this case, we have
\[A_{3+i}=A_{i+2}+A_{i+3},\;(i\geq 1),\] which is
the recursion for the sequence of {\bf Padovan} numbers.
From Proposition \ref{cetiri}, we obtain an assertion for the Padovan numbers.

\item[$4^\circ$]  All $a$'s, $b$'s and $c$'s equal $1$.
The rows of $A_r$ are made of tribonaci numbers, with the initial conditions given by the rows of $A.$
The area $D_k(s,r)$ is also made of the tribonacci numbers, with the  initial conditions given by the first
 three values $s_1,s_2,s_3$ in row $s$ of the array $D_k(s,r).$
Assume that $A$ is the identity matrix of order $3.$ We then have
\begin{prop}
Let $0\leq k<m<n+2$ be integers. Then,
\[
\left|\begin{array}{rrr} T_k(1,0,0)&T_m(1,0,0)&T_{r+2}(1,0,0)\\
T_k(0,1,0)&T_m(0,1,0)&T_{r+2}(0,1,0)\\
T_k(0,0,1)&T_m(0,0,1)&T_{r+2}(0,0,1)
\end{array}\right|=(-1)^{m+k+1}T_r(s_1,s_2,s_3).\] \end{prop}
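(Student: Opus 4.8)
The plan is to read the statement off as the all-ones, identity-initial-data specialization of Proposition \ref{cetiri}. Accordingly I would take $A$ to be the identity matrix of order $3$ and set every $a_i$, $b_i$, and $c_i$ equal to $1$. Two of the three factors on the right-hand side of Proposition \ref{cetiri} then trivialize at once: $\det A = 1$ because $A$ is the identity, and $a_1\cdots a_k = 1$ because all the $a$'s are $1$. Under these choices the column recursion (\ref{rekk}) collapses to $A_{3+i}=A_i+A_{i+1}+A_{i+2}$, the tribonacci recursion, so that each row of $A_r$ is a tribonacci sequence whose first three entries are the corresponding row of $A$. Thus the first row is $T_\bullet(1,0,0)$, the second $T_\bullet(0,1,0)$, and the third $T_\bullet(0,0,1)$.

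With the rows identified, the minor $M(k+1,m+1,r+2)$ is by definition the $3\times 3$ determinant formed from the three retained columns of $A_r$, and reading its entries off the three tribonacci rows reproduces exactly the determinant displayed in the statement. It then remains only to recognize the remaining factor $\det Q$. Here I would invoke the structural description already in place: $\det Q=D_k(s,r)$, and, specializing the recursions (\ref{rek1}), (\ref{post}), and (\ref{pok}) to all coefficients equal to $1$ as recorded in example $4^\circ$, the array $D_k(s,r)$ is, for fixed $k$ and $s$, a tribonacci sequence in $r$. Hence $\det Q=T_r(s_1,s_2,s_3)$, where $s_1,s_2,s_3$ are the first three entries of row $s$ of that array.

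Substituting $\det A=1$, $a_1\cdots a_k=1$, and $\det Q=T_r(s_1,s_2,s_3)$ into the conclusion of Proposition \ref{cetiri} yields $M=(-1)^{m+k+1}T_r(s_1,s_2,s_3)$, which is the claimed identity; the sign $(-1)^{m+k+1}$ is already built into Proposition \ref{cetiri} and needs no recomputation. The only step that is not a matter of direct substitution is the identification $\det Q=T_r(s_1,s_2,s_3)$, and this is the main obstacle: one must check that, with all coefficients equal to $1$, the several recursions (\ref{rek1}), (\ref{post}), and (\ref{pok}) governing $D_k(s,r)$ combine so that $D_k(s,r)$, viewed as a sequence in $r$, is a single tribonacci number determined by the three initial values $s_1,s_2,s_3$. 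This is precisely the content of the preceding structural analysis, so I would cite it rather than rederive it.
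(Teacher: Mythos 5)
Your proposal is correct and follows essentially the same route as the paper: the paper also obtains this proposition by specializing Proposition \ref{cetiri} to the identity matrix $A$ with all $a$'s, $b$'s, $c$'s equal to $1$, so that $\det A=a_1\cdots a_k=1$, the rows of $A_r$ become the tribonacci sequences $T_\bullet(1,0,0)$, $T_\bullet(0,1,0)$, $T_\bullet(0,0,1)$ via (\ref{rekk}), and $\det Q=D_k(s,r)$ is identified with $T_r(s_1,s_2,s_3)$ by citing the structural recursions (\ref{rek1}), (\ref{post}), (\ref{pok}) rather than rederiving them. Like the paper, you treat that last identification as established by the preceding analysis, so the two arguments coincide step for step.
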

\end{enumerate}

\section{$n$-determinants}
We first consider $n$-determinants arising from a matrix $P$ of the following form:
\begin{equation}\label{mpp}P=\left[\begin{array}{ccccc}
p_{1,1}&p_{1,2}&\cdots&p_{1,r-1}&p_{1,r}\\
p_{2,1}&p_{2,2}&\cdots&p_{2,r-1}&p_{2,r}\\
\vdots&\vdots&\cdots&\vdots&\vdots\\
p_{n,1}&p_{n,2}&\cdots&p_{n,r-1}&p_{n,r}\\
-1&0&\cdots&0&0\\
\vdots&\vdots&\cdots&\vdots&\vdots\\
0&0&\cdots&-1&0
\end{array}\right].\end{equation}
We conclude that vector columns of $A_r$ are obtained as linear combinations of the columns of
$A$ whose coefficients are elements of the columns of $B,$ where $B$ is the submatrix of $P$ lying in its first $n$ rows.
In other words, we have  \[A_r=[A|AB].\]

We write $P$ in the form of a block matrix $P=\left[\frac{B}{C}\right],$ where $B$ consists of the first $n$ rows  of $P.$
 Hence, each  row of $C$ has one term equal to $-1,$ and all other terms equal to $0.$
Let $D$ be a $n$-determinant which is obtained  by deleting rows
$j_1,j_2,\ldots,j_r$ of $P.$  Let $k$ be a nonnegative integer such that
\[\{j_1,j_2,\cdots,j_k\}\subseteq\{1,2,\ldots,n\},\]\[
\{j_{k+1},j_{k+2},\cdots,j_{r}\}\subseteq\{n+1,n+2,\ldots,n+r-1\}.\]
We first consider the case when one of
$\{j_1,j_2,\cdots,j_k\},\;\{j_{k+1},j_{k+2},\cdots,j_{r}\}$  is empty.
When $\{j_{k+1},j_{k+2},\cdots,j_{r}\}=\emptyset,$ we have
\begin{prop} The $n$-determinant  $D$ is the minor lying in rows $j_1,\ldots,j_r$ of $B.$ If $r=n,$
then $D=det(B).$
\end{prop}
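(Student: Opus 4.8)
The plan is to argue directly from the definition of the matrix $Q(j_1,\ldots,j_r)$, since under the stated hypothesis the whole statement collapses to reading off which rows of $B$ survive. No appeal to Theorem \ref{th1} or to any determinant identity is needed; the claim is a direct consequence of how $Q$ is extracted from $P$.

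First I would recall that, by its construction, $Q=Q(j_1,\ldots,j_r)$ is the $r\times r$ matrix formed by retaining exactly the rows $j_1,\ldots,j_r$ of $P$, listed in increasing order, and that $P$ decomposes as the block matrix $P=\left[\frac{B}{C}\right]$ whose top block $B$ consists of the first $n$ rows. Next I would unpack the hypothesis $\{j_{k+1},\ldots,j_r\}=\emptyset$: this forces $k=r$, so that $\{j_1,\ldots,j_r\}\subseteq\{1,2,\ldots,n\}$, and in particular $r\le n$. Hence every retained index is at most $n$, which means every row of $Q$ comes from $B$ and no row is taken from the bottom block $C$.

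From here the conclusion is immediate. Because the rows $j_1<\cdots<j_r$ are kept in their natural order, no row permutation occurs and so no sign is introduced; $Q$ is therefore precisely the $r\times r$ submatrix of $B$ obtained by selecting rows $j_1,\ldots,j_r$ together with all $r$ columns of $B$. By definition $\det Q$ is exactly the minor of $B$ lying in those rows, which is the assertion. For the last sentence, when $r=n$ the only $r$-element subset of $\{1,\ldots,n\}$ is the whole set, so $\{j_1,\ldots,j_r\}=\{1,\ldots,n\}$ forces $Q=B$ and therefore $\det Q=\det B$.

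The hard part, such as it is, is purely bookkeeping: one must confirm that the emptiness condition indeed forces $k=r$ and $r\le n$, and then verify that retaining the rows in increasing order introduces no sign, so that $\det Q$ equals the minor of $B$ rather than a signed version of it. Everything else is a matter of unpacking the definitions of $P$, $B$, and $Q$.
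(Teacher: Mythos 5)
Your proof is correct and takes essentially the same approach as the paper: both arguments simply unpack the definition of $Q$, observing that $\{j_{k+1},\ldots,j_r\}=\emptyset$ forces $k=r$ and hence $r\le n$, so every retained row of $P$ lies in the top block $B$ and $D$ is exactly the minor of $B$ in rows $j_1,\ldots,j_r$, with $Q=B$ when $r=n$. The only difference is cosmetic: you spell out the bookkeeping (increasing row order, no sign change) that the paper leaves implicit, while the paper adds the remark that for $r=n$ its minor identity specializes to $\det(AB)=\det A\cdot\det B$, which is a consequence rather than part of the proof.
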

 \begin{proof} In this case, we have $k=r,$ hence $r\leq n.$ It follows that $D$ is a minor of $B$ lying in rows
 $j_1,\ldots,j_r$ of $B.$ If $r=n,$ then $D=\det B,$ and (\ref{minor}) becomes \[\det(AB)=\det A\cdot \det B.\]
\end{proof}
In the sense of the this equation, the above considerations may be regarded as an extension of the formula for the product of two determinants.

Assume that  $\{j_{1},\ldots,j_{k}\}$ is empty. This means that $M$ consists of the first $n-1$ rows of the matrix $A$ and the last columns of the matrix $AB.$ On the other hand, $D=b_{n,r}$ and  equation (\ref{minor}) becomes trivial.

Therefore, we may assume that both sets are not empty.
\begin{prop}\label{ppa} Let $A,B,A_r,\;j_1,\ldots,j_r,k$ be as above.
 Denote by $D$ the minor of $B$ lying in rows
$j_1,j_2,\ldots,j_k,$ and in the columns the indices of which are
different from $j_{k+1}-n,\ldots,j_{r}-n.$ If $M$ is the
determinant of the matrix which is obtained by deleting columns
$j_1,j_2,\ldots,j_r$ of $[A|AB],$ then
\begin{equation}\label{minor}M= (-1)^{kn+j_1+\cdots+j_k+\frac{(r-k)(r+k+3)}{2}}\cdot D\cdot\det A.\end{equation}
\end{prop}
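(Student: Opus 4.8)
The plan is to recognize Proposition~\ref{ppa} as the specialization of Theorem~\ref{th1} to the matrix $P$ of the special shape (\ref{mpp}), and then to evaluate $\det Q$ explicitly for this $P$. Since Theorem~\ref{th1} already supplies $M=\sigma(M)\cdot\det Q\cdot\det A$, the whole task splits into two parts: first identify $\det Q$ with $\pm D$, and then check that the resulting sign agrees with the exponent displayed in (\ref{minor}).

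First I would describe the shape of $Q$, recalling that $Q$ is formed from the rows $j_1,\dots,j_r$ of $P$. Writing $P=\left[\frac{B}{C}\right]$, the indices $j_1,\dots,j_k$ (those at most $n$) select $k$ rows of $B$, while the indices $j_{k+1},\dots,j_r$ (those exceeding $n$) select $r-k$ rows of $C$. In the form (\ref{mpp}) every row of $C$ has a single nonzero entry, a $-1$; row $n+i$ carries this $-1$ in column $i$, so the row $j_{k+l}$ contributes a $-1$ in column $j_{k+l}-n$. Hence $Q$ is an $r\times r$ matrix whose top $k$ rows are the rows $j_1,\dots,j_k$ of $B$ and whose bottom $r-k$ rows are scalar rows supported in the columns $j_{k+1}-n,\dots,j_r-n$.

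The decisive step is a Laplace expansion of $\det Q$ along those bottom $r-k$ rows. Since each carries exactly one nonzero entry, only one term survives: the columns $j_{k+1}-n,\dots,j_r-n$ are necessarily matched with the $C$-rows, producing a diagonal block of $-1$'s with determinant $(-1)^{r-k}$, and the complementary minor is exactly the minor of $B$ on rows $j_1,\dots,j_k$ and on the remaining columns, that is, the quantity $D$ of the statement. This yields $\det Q=\pm D$ at once, and simultaneously shows that $D$ is square of order $k$, as it must be.

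The main obstacle is the sign bookkeeping. The surviving Laplace term carries the factor $(-1)^{\Sigma_R+\Sigma_S}$, where $\Sigma_R=\sum_{l=1}^{r-k}(k+l)$ records the positions of the $C$-rows inside $Q$ and $\Sigma_S=\sum_{l=1}^{r-k}(j_{k+l}-n)$ records the occupied columns; combined with $(-1)^{r-k}$ this fixes the sign of $\det Q$ relative to $D$. I would then multiply by $\sigma(M)=(-1)^{nr+j_1+\cdots+j_r+\frac{(r-1)r}{2}}$ and reduce the total exponent modulo $2$: the $n$-terms collapse to $kn$, the index sum collapses from $\sum_{l=1}^{r}j_l$ to $\sum_{l=1}^{k}j_l$, and the remaining purely numerical terms must be shown to match $\frac{(r-k)(r+k+3)}{2}$. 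Carrying out this last congruence—most transparently by substituting $m=r-k$ and expanding both sides—is the delicate part, and I would first test it on a small case such as $n=r=2$ to pin down the sign conventions before committing to the general reduction.
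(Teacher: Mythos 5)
Your plan is exactly the paper's own proof: specialize Theorem \ref{th1} to the matrix (\ref{mpp}), write $Q$ with the selected rows of $B$ on top and the selected $(-1)$-rows of $C$ at the bottom, note that the Laplace expansion along those bottom $r-k$ rows has a single surviving term (a diagonal block of $-1$'s times the minor $D$), and then reduce the sign. All of that is sound. The problem is the one step you defer: the ``remaining purely numerical terms'' do \emph{not} match $\frac{(r-k)(r+k+3)}{2}$. Carrying your bookkeeping out with $m=r-k$: the Laplace term contributes $\Sigma_R+\Sigma_S+m$ with $\Sigma_R=km+\frac{m(m+1)}{2}$ and $\Sigma_S=j_{k+1}+\cdots+j_r-mn$, so
\[\det Q=(-1)^{\frac{(r-k)(r+k+3)}{2}+j_{k+1}+\cdots+j_r-(r-k)n}\,D,\]
and after multiplying by $\sigma(M)=(-1)^{nr+j_1+\cdots+j_r+\frac{r(r-1)}{2}}$ the total exponent reduces, modulo $2$, to
\[kn+j_1+\cdots+j_k+\frac{r(r-1)}{2}+\frac{(r-k)(r+k+3)}{2}.\]
The term $\frac{r(r-1)}{2}$ coming from $\sigma(M)$ does not cancel; it is odd whenever $r\equiv 2,3\pmod 4$, so the exponent printed in (\ref{minor}) is off by $(-1)^{r(r-1)/2}$. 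The same defect sits in the last display of the paper's proof, which simply asserts the resulting congruence without justification --- and that congruence is false.

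Your instinct to test a small case first would have caught this. Take $n=r=2$, $j_1=1$, $j_2=3$ (so $k=1$): deleting columns $1,3$ of $[A|AB]$ gives $M=\det[A_2\,|\,p_{1,2}A_1+p_{2,2}A_2]=-p_{1,2}\det A$, while $D=p_{1,2}$ and the exponent in (\ref{minor}) is $kn+j_1+\frac{(r-k)(r+k+3)}{2}=2+1+3$, which is even, predicting $M=+p_{1,2}\det A$. A structural check says the same thing: for $k=r=n$ and $j_i=i$ the formula is supposed to collapse to $\det(AB)=\det A\cdot\det B$ (as the paper itself claims just before this proposition), but the printed exponent is then $n^2+\frac{n(n+1)}{2}=\frac{n(3n+1)}{2}$, odd for $n=2$, whereas adding the missing $\frac{r(r-1)}{2}$ makes it $2n^2$, always even. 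So keep your argument --- it is the right one and is the paper's --- but the honest outcome of the computation is the corrected sign $(-1)^{kn+j_1+\cdots+j_k+\frac{r(r-1)}{2}+\frac{(r-k)(r+k+3)}{2}}$; the reduction to the statement as printed cannot be completed, because the statement (and the paper's final step) contains a sign error.
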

\begin{proof}
  We may write $D$ in
the form $D=\det \left[\frac{Q_1}{Q_2}\right],$ where $Q_1$
consists of rows $j_1,j_2,\ldots,j_k$ of $B,$ and
  $Q_2$ consists of rows of $\{j_{k+1},\ldots,j_{r}\}$ of $P.$  The matrix $Q_2$ has $r-k$ rows, which have  only one term equal to $-1,$ and all other terms equal to  $0.$
  It  follows  that $Q_2$ has only one submatrix $X=(x_{ij})$ of order $r-k,$ the determinant of which  is not $0.$
  It is clear that $X$ is a diagonal matrix with $-1$'s on the main diagonal. Therefore,
  its determinant equals $(-1)^{r-k}.$ We conclude that the expansion of $D$
  by elements of the last $r-k$ rows has only one term.
We calculate the sum of the indices of rows and columns of $D,$ in which lies the minor $X$.
 The sum of the indices of rows is \[(k+1)+(k+2)+\cdots+r=k(r-k)+\frac{(r-k)(r-k+1)}{2}.\]
 In the matrix $P,$ the $(-1)$'s lie in rows $n+i$ and columns $i.$ The indices of the columns in $P$ and $D$ are the same, hence
 the sum of the  indices of the columns containing $X$ is
 \[(j_{k+1}-n)+(j_{k+2}-n)+\cdots+(j_r-n)=j_{k+1}+\cdots+j_r-(r-k)n.\]
Therefore,
\[D=(-1)^{j_{k+1}+\cdots+j_r-(r-k)n+\frac{(r-k)(r+k+3)}{2}}Y,\]
where $Y$ is the complement minor of $X$ in $D.$
 We saw that $Y$ lies in rows
$j_1,j_2,\ldots,j_k$ of $P.$ We now see that it lies in the
columns of $P,$ the indices of which are different from
$\{j_{k+1}-n,\ldots,j_{r}-n\}.$ Finally, we have
\[\sigma(M)\cdot (-1)^{j_{k+1}+\cdots+j_r-(r-k)n+\frac{(r-k)(r+k+3)}{2}}=(-1)^{kn+j_1+\cdots+j_k+\frac{(r-k)(r+k+3)}{2}},\] and the proposition is proved.
\end{proof}

We finish the paper with a formula in which
the generalized Vandermonde determinant is expressed  in terms of
the elementary symmetric polynomials.
 One such formula  is the Jacobi-Trudi's formula, Theorem 7.16.1,\cite{sta}.

 The determinant of the form  \[M(k_1,\ldots,k_n)=\left|
        \begin{array}{ccccc}
          x_1^{k_1} & x_1^{k_2} & x_1^{k_3} &\cdots&x_1^{k_n} \\
          x_2^{k_1} & x_2^{k_2} & x_2^{k_3} &\cdots&x_2^{k_n} \\
          \vdots &\vdots & \vdots &\vdots \\
           x_n^{k_1} & x_n^{k_2} & x_n^{k_3} &\cdots&x_n^{k_n} \\
        \end{array}
      \right|,\;(0\leq k_1<\cdots<k_n)\] is called the generalized Vandermonde determinant.
      The expression \[\frac{M(k_1,\ldots,k_n)}{ M(1,\ldots,n)}\]
 is called the Schur function.

Define a polynomial $f_{n}(x)$ such that
\[f_n(x)=(x-x_1)(x-x_2)\cdots (x-x_n).\]
Expanding the right side, we have
\[f_{n}(x)=x^n-p_{n}x^{n-1}-\cdots-p_1,\]
where
\begin{equation}\label{}p_{n-k}=(-1)^{k}\sigma_{k+1}(x_1,x_2,\ldots,x_n),\;(k=0,1,2,\ldots,n-1).\end{equation}
Here $\sigma_{k+1}(x_1,x_2,\ldots,x_n)$ is the elementary symmetric polynomial of order $k+1$ of $x_1,\ldots,x_n.$
It follows that
\begin{equation}\label{usp}x_i^n=p_1+p_2x_i+\cdots+p_nx_i^{n-1},\;(i=1,2,\ldots,n).\end{equation}
Consider the following matrix:
\[V=\left[
        \begin{array}{ccccc|ccc}
          1 & x_1 & x_1^2 &\cdots&x_1^{n-1}& x_1^n&\cdots&x_1^{k_n}\\
         1 & x_2 & x_2^2 &\cdots&x_2^{n-1}& x_2^n&\cdots&x_2^{k_n}\\
          \vdots &\vdots & \vdots &\vdots &\vdots&\vdots\\
          1 & x_n & x_n^2 &\cdots&x_n^{n-1}& x_n^n&\cdots&x_n^{k_n}
        \end{array}
      \right].\]

According to (\ref{usp}), we have
\begin{equation}\label{jjj3}
V_{n+k}=\sum_{j=1}^{n}p_jV_{j+k-1},\;(k=1,2,\ldots,k_n-n).\end{equation}

In  view of (\ref{jjj3}), the corresponding  matrix  $P$ in (\ref{mpa}) is a $k_n$ by $k_n+1-n$ matrix whose elements are
the elementary symmetric polynomials of $x_1,x_2,\ldots,x_n.$

For $\sigma(M),$ one easily obtains that
\[\sigma(M)=(-1)^{\frac{n(n-1)}{2}+\sum_{i=1}^{n-1}k_i}.\]
Also, the corresponding $n$-determinant $\det Q$  is obtained by deleting the rows $k_1,k_2,\ldots,k_{n-1}$ of $P.$

By Theorem \ref{th1}, we obtain
\begin{prop} The following formula is true
\[ M(k_1,k_2,\ldots,k_n)=\sigma(M)\cdot \det Q\cdot M(1,2,\ldots,n).\]
\end{prop}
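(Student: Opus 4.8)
The plan is to recognize this statement as a direct specialization of Theorem~\ref{th1} to the matrix $V$, so that the work reduces to (i) checking that $V$ is a legitimate instance of the matrix $A_r$ of (\ref{r1}), (ii) matching the minors, and (iii) a parity computation for $\sigma(M)$.

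First I would isolate the data. Write $V=[A\mid AB]$, where $A$ is the block of the first $n$ columns of $V$, namely the ordinary Vandermonde matrix with columns $1,x_i,\dots,x_i^{n-1}$; thus $\det A$ is the ordinary Vandermonde determinant, which is $M(1,2,\dots,n)$ in the notation of the statement. Setting $r=k_n+1-n$, the remaining columns $V_{n+1},\dots,V_{n+r}$ are the vectors with entries $x_i^{n},\dots,x_i^{k_n}$. The identity (\ref{usp}), which is just the relation $f_n(x_i)=0$, says precisely that each such column is the combination (\ref{jjj3}) of the $n$ immediately preceding columns. Hence $V$ satisfies (\ref{r1}), and reading off the coefficients identifies the matrix $P$ of (\ref{mpa}) as the banded $k_n\times(k_n+1-n)$ matrix whose nonzero band is formed from $p_1,\dots,p_n$, i.e.\ from the signed elementary symmetric polynomials.

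Second I would match the minors. The generalized Vandermonde $M(k_1,\dots,k_n)$ is the minor of $V$ that keeps exactly the columns of exponents $k_1,\dots,k_n$; equivalently it is $M(\widehat{j_1},\dots,\widehat{j_r})$, where $\{j_1,\dots,j_r\}$ is the complement of the retained column indices among $\{1,\dots,k_n+1\}$. Since $k_n$ is the largest exponent the last column is retained, consistent with the rule that the final column of $A_r$ is never deleted. By the definition of $Q$, the matrix $Q(j_1,\dots,j_r)$ is the submatrix of $P$ on the rows indexed by $j_1,\dots,j_r$, which is the deletion of the appropriate $n-1$ rows; this is exactly the $n$-determinant $\det Q$ named in the statement. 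Applying Theorem~\ref{th1} then gives $M(k_1,\dots,k_n)=\sigma(M)\cdot\det Q\cdot M(1,2,\dots,n)$ immediately.

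The only genuine computation, and the step I expect to be the main obstacle, is the evaluation of the sign. By its definition $\sigma(M)=(-1)^{nr+j_1+\cdots+j_r+r(r-1)/2}$; here $\sum_{t} j_t=\binom{k_n+2}{2}-n-\sum_{i=1}^{n}k_i$, since the deleted indices are the complement of the retained ones $k_1+1,\dots,k_n+1$. Substituting $r=k_n+1-n$ and reducing the resulting exponent modulo $2$ should collapse it to $\tfrac{n(n-1)}{2}+\sum_{i=1}^{n-1}k_i$, where the disappearance of the $k_n$ term reflects that the last column is fixed. Carrying out this parity bookkeeping carefully --- and pinning down the off-by-one in the index conventions so that the retained rows of $P$ are exactly those named --- is where the real (if routine) care is required; once it is done, the proposition follows from Theorem~\ref{th1}.
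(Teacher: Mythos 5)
Your strategy is exactly the paper's: the paper also proves this proposition by observing that (\ref{usp})--(\ref{jjj3}) exhibit $V$ as an instance of (\ref{r1}), identifying $P$ as the banded matrix built from the signed elementary symmetric polynomials, and then invoking Theorem \ref{th1}. So there is no difference in route. The problem is that the two steps you defer as routine are precisely the ones that do not come out as claimed, and an honest completion of your outline would force a correction of the statement rather than a confirmation of it. First, the block $A$ of the first $n$ columns of $V$ has columns $1,x_i,\dots,x_i^{n-1}$, so in the notation of the statement $\det A=M(0,1,\dots,n-1)$, \emph{not} $M(1,2,\dots,n)$; these differ by the factor $x_1x_2\cdots x_n$, since $M(1,\dots,n)=x_1\cdots x_n\,M(0,\dots,n-1)$. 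Theorem \ref{th1} therefore produces $M(0,1,\dots,n-1)$ on the right-hand side. Second, the parity computation does not collapse to $\tfrac{n(n-1)}{2}+\sum_{i=1}^{n-1}k_i$: using your (correct) value $\sum_t j_t=\binom{k_n+2}{2}-n-\sum_{i=1}^{n}k_i$ and $r=k_n+1-n$, one finds
\[
nr+\sum_{t=1}^{r}j_t+\frac{r(r-1)}{2}\;\equiv\;(n+1)+\frac{n(n-1)}{2}+\sum_{i=1}^{n-1}k_i \pmod 2,
\]
so there is an extra factor $(-1)^{n+1}$ that the paper's formula (which you aimed to reproduce) omits.

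A concrete check that both slips are real: take $n=2$, $(k_1,k_2)=(0,2)$. Then $r=1$, the deleted column is $j_1=2$, so $\sigma(M)=(-1)^{2+2}=+1$ and $Q=[\,x_1+x_2\,]$; Theorem \ref{th1} gives
\[
M(0,2)=x_2^2-x_1^2=(x_1+x_2)(x_2-x_1)=\det Q\cdot M(0,1),
\]
whereas the displayed proposition would assert $M(0,2)=(-1)^{1+0}(x_1+x_2)\cdot M(1,2)=-(x_1+x_2)\,x_1x_2(x_2-x_1)$, which is false. Carried out correctly, your plan proves
\[
M(k_1,\dots,k_n)=(-1)^{\,n+1+\frac{n(n-1)}{2}+\sum_{i=1}^{n-1}k_i}\cdot\det Q\cdot M(0,1,\dots,n-1),
\]
which is the statement the paper evidently intends (and which still expresses the Schur function, now with its standard denominator $M(0,1,\dots,n-1)$, as $\pm\det Q$). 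To be fair, both defects are inherited from the paper itself: its proof sketch is the same specialization of Theorem \ref{th1} and contains the same two errors, in the sign and in which Vandermonde determinant appears. But as a proof of the literal statement, your argument fails at exactly those two points, and the claim ``$\det A$ is $M(1,2,\dots,n)$'' is the step that cannot be repaired without changing the statement.
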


Note that the expression $\sigma(M)\cdot \det Q$ equals the Schur function.

\label{}





\bibliographystyle{model1a-num-names}

\end{document}